\documentclass[11pt, twoside]{article}
\usepackage{latexsym}
\usepackage{cite}
\usepackage{amsmath}
\usepackage{amssymb}
\usepackage[all]{xy}
\usepackage{amsfonts}
\usepackage{verbatim}
\usepackage{amsthm}
\usepackage{mathrsfs}
\usepackage{epsfig}
\usepackage{xy}
\usepackage{array}
\usepackage{stmaryrd}
\usepackage{graphicx,color}
\usepackage{xcolor}
\usepackage[colorlinks=true,linkcolor=blue,citecolor=blue]{hyperref}
\usepackage{tikz}
\usetikzlibrary{arrows,calc}
\usepackage{etex}
\usepackage{mathdots}
\usepackage{float}
\usepackage{graphics}
\usepackage{pdflscape}
\usepackage{extarrows}
\usepackage{anysize,hyperref}
\input xypic
\xyoption{all}
\usepackage{bm}
\usepackage[perpage,symbol]{footmisc}
\topmargin=-0.5truein \oddsidemargin=0truein
\evensidemargin=0truein \textwidth=6.2truein \textheight=9.5truein
\usepackage{setspace}
\linespread{1}

%%%% the commands of myself %%%%%%

\def\C{\mathscr{C}}
\def\E{\mathbb{E}}

\def\s{\mathfrak{s}}
\def\id{\mathrm{id}}
\def\op{^\mathrm{op}}
\def\Ab{\mathsf{Ab}}
\def\del{\delta}
\def\dr{\ar@{->}[r]}

\def\X{\mathscr{X}}
\def\Y{\mathscr{Y}}

\newcommand{\CC}{{\bf{C}}^{n+2}_{\C}}
\newcommand{\mr}{\hbox{\boldmath$\cdot$}}
\newcommand{\ov}{\overset}
\newcommand{\lra}{\longrightarrow}
\newcommand{\co}{\colon}
\newcommand{\uas}{^{\ast}}            %%% ^*
\newcommand{\sas}{_{\ast}}
\newcommand{\Xd}{\langle X^{\mr},\del\rangle}  %%% <X,¦Ä>
  %%% <Y,¦Ä>
  %%% <Z,¦Ä>
\newcommand{\Yr}{\langle Y^{\mr},\rho\rangle}  %%% <Y,¦Ñ>
     %%% long left arrow
    %%% poon(upper) right
%%% surjection

\newcommand{\ush}{^\sharp}           %%% ^sharp
\newcommand{\ssh}{_\sharp}
\SelectTips{cm}{10}

\begin{document}
\baselineskip=15pt
\title{\Large{\bf  A new characterization of $\bm n$-exangulated categories with $ (\bm{n+2})$-angulated structure\footnotetext{\hspace{-1em}This work was supported by the National Natural Science Foundation of China (Grant Nos. 11901190 and 11671221), and the Hunan Provincial Natural Science Foundation of China (Grant No. 2018JJ3205),  and by the Scientific Research Fund of Hunan Provincial Education Department (Grant No. 19B239).}}}
\medskip
\author{Jian He and Panyue Zhou}

\date{}

\maketitle
\def\blue{\color{blue}}
\def\red{\color{red}}

\newtheorem{theorem}{Theorem}[section]
\newtheorem{lemma}[theorem]{Lemma}
\newtheorem{corollary}[theorem]{Corollary}
\newtheorem{proposition}[theorem]{Proposition}
\newtheorem{conjecture}{Conjecture}
\theoremstyle{definition}
\newtheorem{definition}[theorem]{Definition}
\newtheorem{question}[theorem]{Question}
\newtheorem{remark}[theorem]{Remark}
\newtheorem{remark*}[]{Remark}
\newtheorem{example}[theorem]{Example}
\newtheorem{example*}[]{Example}
\newtheorem{condition}[theorem]{Condition}
\newtheorem{condition*}[]{Condition}
\newtheorem{construction}[theorem]{Construction}
\newtheorem{construction*}[]{Construction}

\newtheorem{assumption}[theorem]{Assumption}
\newtheorem{assumption*}[]{Assumption}

\baselineskip=17pt
\parindent=0.5cm

\begin{abstract}
Herschend--Liu--Nakaoka introduced the notion of $n$-exangulated categories. It is not only a higher dimensional analogue of extriangulated categories defined by Nakaoka--Palu, but also gives a simultaneous generalization of $(n+2)$-angulated in the sense of Geiss--Keller--Oppermann and $n$-exact categories in the sense of Jasso.
In this article, we show that an $n$-exangulated category has the structure of an $(n+2)$-angulated category if and only if for any object $X$ in the category, the morphism $0\to X$ is a trivial deflation and the morphism $X\to 0$ is a trivial inflation.\\[0.5cm]
\textbf{Key words:} $n$-exangulated categories; $(n+2)$-angulated categories.
\\[0.1cm]
\textbf{2020 Mathematics Subject Classification:} 18G80; 18G15.
\medskip
\end{abstract}

\pagestyle{myheadings}
\markboth{\rightline {\scriptsize J. He and P. Zhou }}
         {\leftline{\scriptsize A new characterization of $n$-exangulated categories with $ (n+2)$-angulated structure}}

\section{Introduction}
Triangulated categories and exact categories are two fundamental structures in algebra, geometry and topology.
They are also important tools in many mathematical branches.
Nakaoka and Palu \cite{NP} recently introduced the notion
of extriangulated categories, whose extriangulated structures are given by $\E$-triangles with some axioms. Triangulated categories and exact categories are extriangulated categories. There are a lot of examples of extriangulated categories which are neither triangulated categories nor exact categories, see \cite{NP,ZZ,HZZ1}.
So how to judge whether extriangulated categories are not triangulated categories or exact categories become very important. From \cite{NP}, we already know that exact categories can be regarded as extriangulated categories, whose inflations are
monomorphic and whose deflations are epimorphic. Conversely,
extriangulated categories which are exact categories are those where every
inflation is a monomorphism and every deflation is an epimorphism, see \cite[Corollary 3.18]{NP}. On the other hand, extriangulated categories which are triangulated are those where the ${\rm Ext}^1$ bifunctor $\E$ is such that $\E(-,-) ={\rm Hom}(-,\Sigma-)$ for some auto-equivalence $\Sigma$ on the category, see \cite[Proposition 3.22]{NP}.
Recently,  Msapato \cite[Theorem 3.3]{M} gave another characterization
of extriangulated categories which are triangulated. More specifically, an extriangulated category has the structure of a triangulated category if and only if for any object $X$ in the category, the morphism $0\to X$ is a deflation
and the morphism $X\to 0$ is an inflation.

In \cite{GKO}, Geiss, Keller and Oppermann introduced
a new type of categories, called $(n+2)$-angulated categories, which generalize
triangulated categories: the classical triangulated categories are the special case $n=1$. They appear for example as $n$-cluster tilting subcategories of triangulated categories which are closed under the $n$th power of the shift functor.
Later, Jasso \cite{J} introduced $n$-exact categories which are categories inhabited
by certain exact sequences with $n+2$ terms, called $n$-exact sequences. The case $n=1$
corresponds to the usual concepts of exact categories.
An important source of examples of $n$-exact categories are $n$-cluster
tilting subcategories, see \cite[Theorem 4.14]{J}.
Recently, Herschend, Liu and Nakaoka \cite{HLN} introduced the notion of $n$-exangulated categories. It should be noted that the case $n =1$ corresponds to extriangulated categories. As typical examples we have that $n$-exact and $(n+2)$-angulated categories are $n$-exangulated, see \cite[Proposition 4.5 and Proposition 4.34]{HLN}. However, there are some other examples of $n$-exangulated categories which are neither $(n+2)$-angulated nor $n$-exact, see \cite{HLN,LZ,HZZ2}.
So a natural question is how to judge whether $n$-exangulated categories is $(n+2)$-angulated categories or $n$-exact categories.
Herschend, Liu and Nakaoka \cite{HLN} showed that $n$-exangulated categories which are $(n+2)$-angulated are those where the ${\rm Ext}^1$ bifunctor $\E$ is such that $\E(-,-) ={\rm Hom}(-,\Sigma-)$ for some auto-equivalence $\Sigma$ on the category, see \cite[Proposition 4.8]{HLN}.  They also characterized $n$-exact categories as $n$-exangulated categories for which inflations are monomorphisms and deflations are epimorphisms. Conversely,
$n$-exangulated categories which are $n$-exact categories are those where every
inflation is a monomorphism and every deflation is an epimorphism, see \cite[Proposition 4.37]{HLN}.

In this article, based on Msapato's idea, we offer a new characterization of $n$-exangulated categories which are $(n+2)$-angulated. More precisely, an $n$-exangulated category has the structure of an $(n+2)$-angulated category if and only if for any object $X$ in the category, the morphism $0\to X$ is a trivial deflation
and the morphism $X\to 0$ is a trivial inflation, see Theorem \ref{main}.
This result generalizes the work by Msapato \cite[Theorem 3.3]{M}.

This article is organized as follows. In Section 2, we review some elementary definitions
and facts on $n$-exangulated categories.  In Section 3, we recall the definition
of $(n+2)$-angulated categories and  prove our main result.

\section{Preliminaries}
Let us briefly recall some definitions and basic properties of $n$-exangulated categories from \cite{HLN}.
Throughout this article, let $\C$ be an additive category and $n$ be any positive integer.

\begin{definition}\cite[Definition 2.1]{HLN}
Suppose that $\C$ is equipped with an additive bifunctor $\E\colon\C\op\times\C\to\Ab$, where $\Ab$ is the category of abelian groups. For any pair of objects $A,C\in\C$, an element $\del\in\E(C,A)$ is called an {\it $\E$-extension} or simply an {\it extension}. We also write such $\del$ as ${}_A\del_C$ when we indicate $A$ and $C$.

Let ${}_A\del_C$ be any extension. Since $\E$ is a bifunctor, for any $a\in\C(A,A')$ and $c\in\C(C',C)$, we have extensions
$$ \E(C,a)(\del)\in\E(C,A')\ \ \text{and}\ \ \E(c,A)(\del)\in\E(C',A). $$
We simply denote them by $a_{\ast}\del$ and $c^{\ast}\del$.
In this terminology, we have
$$\E(c,a)(\del)=c^{\ast}a_{\ast}\del=a_{\ast}c^{\ast}\del\in\E(C',A').$$
For any $A,C\in\C$, the zero element ${}_A0_C=0\in\E(C,A)$ is called the {\it split $\E$-extension}.
\end{definition}

\begin{definition}\cite[Definition 2.3]{HLN}
Let ${}_A\del_C,{}_{A'}\del'_{C'}$ be any pair of $\E$-extensions. A {\it morphism} $(a,c)\colon\del\to\del'$ of extensions is a pair of morphisms $a\in\C(A,B)$ and $c\in\C(A',C')$ in $\C$, satisfying the equality
$$a_{\ast}\del=c^{\ast}\del'. $$
\end{definition}

\begin{definition}\cite[Definition 2.7]{HLN}
Let $\bf{C}_{\C}$ be the category of complexes in $\C$. As its full subcategory, define $\CC$ to be the category of complexes in $\C$ whose components are zero in the degrees outside of $\{0,1,\ldots,n+1\}$. Namely, an object in $\CC$ is a complex $X^{\mr}=\{X_i,d^X_i\}$ of the form
\[ X_0\xrightarrow{d^X_0}X_1\xrightarrow{d^X_1}\cdots\xrightarrow{d^X_{n-1}}X_n\xrightarrow{d^X_n}X_{n+1}. \]
We write a morphism $f^{\mr}\co X^{\mr}\to Y^{\mr}$ simply $f^{\mr}=(f^0,f^1,\ldots,f^{n+1})$, only indicating the terms of degrees $0,\ldots,n+1$.
\end{definition}

\begin{definition}\cite[Definition 2.11]{HLN}
By Yoneda lemma, any extension $\del\in\E(C,A)$ induces natural transformations
\[ \del\ssh\colon\C(-,C)\Rightarrow\E(-,A)\ \ \text{and}\ \ \del\ush\colon\C(A,-)\Rightarrow\E(C,-). \]
For any $X\in\C$, these $(\del\ssh)_X$ and $\del\ush_X$ are given as follows.
\begin{enumerate}
\item[\rm(1)] $(\del\ssh)_X\colon\C(X,C)\to\E(X,A)\ :\ f\mapsto f\uas\del$.
\item[\rm (2)] $\del\ush_X\colon\C(A,X)\to\E(C,X)\ :\ g\mapsto g\sas\delta$.
\end{enumerate}
We simply denote $(\del\ssh)_X(f)$ and $\del\ush_X(g)$ by $\del\ssh(f)$ and $\del\ush(g)$, respectively.
\end{definition}

\begin{definition}\cite[Definition 2.9]{HLN}
 Let $\C,\E,n$ be as before. Define a category $\AE:=\AE^{n+2}_{(\C,\E)}$ as follows.
\begin{enumerate}
\item[\rm(1)]  A pair $\Xd$ is an object of the category $\AE$ with $X^{\mr}\in\CC$
and $\del\in\E(X_{n+1},X_0)$, called an $\E$-attached
complex of length $n+2$, if it satisfies
$$(d_0^X)_{\ast}\del=0~~\textrm{and}~~(d^X_n)^{\ast}\del=0.$$
We also denote it by
$$X_0\xrightarrow{d_0^X}X_1\xrightarrow{d_1^X}\cdots\xrightarrow{d_{n-2}^X}X_{n-1}
\xrightarrow{d_{n-1}^X}X_n\xrightarrow{d_n^X}X_{n+1}\overset{\delta}{\dashrightarrow}.$$
\item[\rm (2)]  For such pairs $\Xd$ and $\langle Y^{\mr},\rho\rangle$,  $f^{\mr}\colon\Xd\to\langle Y^{\mr},\rho\rangle$ is
defined to be a morphism in $\AE$ if it satisfies $(f_0)_{\ast}\del=(f_{n+1})^{\ast}\rho$.

\end{enumerate}
\end{definition}

\begin{definition}\cite[Definition 2.13]{HLN}\label{def1}
 An {\it $n$-exangle} is an object $\Xd$ in $\AE$ that satisfies the listed conditions.
\begin{enumerate}
\item[\rm (1)] The following sequence of functors $\C\op\to\Ab$ is exact.
$$
\C(-,X_0)\xLongrightarrow{\C(-,\ d^X_0)}\cdots\xLongrightarrow{\C(-,\ d^X_n)}\C(-,X_{n+1})\xLongrightarrow{~\del\ssh~}\E(-,X_0)
$$
\item[\rm (2)] The following sequence of functors $\C\to\Ab$ is exact.
$$
\C(X_{n+1},-)\xLongrightarrow{\C(d^X_n,\ -)}\cdots\xLongrightarrow{\C(d^X_0,\ -)}\C(X_0,-)\xLongrightarrow{~\del\ush~}\E(X_{n+1},-)
$$
\end{enumerate}
In particular any $n$-exangle is an object in $\AE$.
A {\it morphism of $n$-exangles} simply means a morphism in $\AE$. Thus $n$-exangles form a full subcategory of $\AE$.
\end{definition}

\begin{definition}\cite[Definition 2.22]{HLN}
Let $\s$ be a correspondence which associates a homotopic equivalence class $\s(\del)=[{}_AX^{\mr}_C]$ to each extension $\del={}_A\del_C$. Such $\s$ is called a {\it realization} of $\E$ if it satisfies the following condition for any $\s(\del)=[X^{\mr}]$ and any $\s(\rho)=[Y^{\mr}]$.
\begin{itemize}
\item[{\rm (R0)}] For any morphism of extensions $(a,c)\co\del\to\rho$, there exists a morphism $f^{\mr}\in\CC(X^{\mr},Y^{\mr})$ of the form $f^{\mr}=(a,f_1,\ldots,f_n,c)$. Such $f^{\mr}$ is called a {\it lift} of $(a,c)$.
\end{itemize}
In such a case, we simple say that \lq\lq$X^{\mr}$ realizes $\del$" whenever they satisfy $\s(\del)=[X^{\mr}]$.

Moreover, a realization $\s$ of $\E$ is said to be {\it exact} if it satisfies the following conditions.
\begin{itemize}
\item[{\rm (R1)}] For any $\s(\del)=[X^{\mr}]$, the pair $\Xd$ is an $n$-exangle.
\item[{\rm (R2)}] For any $A\in\C$, the zero element ${}_A0_0=0\in\E(0,A)$ satisfies
\[ \s({}_A0_0)=[A\ov{\id_A}{\lra}A\to0\to\cdots\to0\to0]. \]
Dually, $\s({}_00_A)=[0\to0\to\cdots\to0\to A\ov{\id_A}{\lra}A]$ holds for any $A\in\C$.
\end{itemize}
Note that the above condition {\rm (R1)} does not depend on representatives of the class $[X^{\mr}]$.
\end{definition}

\begin{definition}\cite[Definition 2.23]{HLN}
Let $\s$ be an exact realization of $\E$.
\begin{enumerate}
\item[\rm (1)] An $n$-exangle $\Xd$ is called an $\s$-{\it distinguished} $n$-exangle if it satisfies $\s(\del)=[X^{\mr}]$. We often simply say {\it distinguished $n$-exangle} when $\s$ is clear from the context.
\item[\rm (2)]  An object $X^{\mr}\in\CC$ is called an {\it $\s$-conflation} or simply a {\it conflation} if it realizes some extension $\del\in\E(X_{n+1},X_0)$.
\item[\rm (3)]  A morphism $f$ in $\C$ is called an {\it $\s$-inflation} or simply an {\it inflation} if it admits some conflation $X^{\mr}\in\CC$ satisfying $d_X^0=f$.
\item[\rm (4)]  A morphism $g$ in $\C$ is called an {\it $\s$-deflation} or simply a {\it deflation} if it admits some conflation $X^{\mr}\in\CC$ satisfying $d_X^n=g$.
\end{enumerate}
\end{definition}

\begin{definition}\cite[Definition 2.27]{HLN}
For a morphism $f^{\mr}\in\CC(X^{\mr},Y^{\mr})$ satisfying $f^0=\id_A$ for some $A=X_0=Y_0$, its {\it mapping cone} $M_f^{\mr}\in\CC$ is defined to be the complex
\[ X_1\xrightarrow{d^{M_f}_0}X_2\oplus Y_1\xrightarrow{d^{M_f}_1}X_3\oplus Y_2\xrightarrow{d^{M_f}_2}\cdots\xrightarrow{d^{M_f}_{n-1}}X_{n+1}\oplus Y_n\xrightarrow{d^{M_f}_n}Y_{n+1} \]
where $d^{M_f}_0=\begin{bmatrix}-d^X_1\\ f_1\end{bmatrix},$
$d^{M_f}_i=\begin{bmatrix}-d^X_{i+1}&0\\ f_{i+1}&d^Y_i\end{bmatrix}\ (1\le i\le n-1),$
$d^{M_f}_n=\begin{bmatrix}f_{n+1}&d^Y_n\end{bmatrix}$.

{\it The mapping cocone} is defined dually, for morphisms $h^{\mr}$ in $\CC$ satisfying $h_{n+1}=\id$.
\end{definition}

\begin{definition}\cite[Definition 2.32]{HLN}
An {\it $n$-exangulated category} is a triplet $(\C,\E,\s)$ of additive category $\C$, additive bifunctor $\E\co\C\op\times\C\to\Ab$, and its exact realization $\s$, satisfying the following conditions.
\begin{itemize}
\item[{\rm (EA1)}] Let $A\ov{f}{\lra}B\ov{g}{\lra}C$ be any sequence of morphisms in $\C$. If both $f$ and $g$ are inflations, then so is $g\circ f$. Dually, if $f$ and $g$ are deflations, then so is $g\circ f$.

\item[{\rm (EA2)}] For $\rho\in\E(D,A)$ and $c\in\C(C,D)$, let ${}_A\langle X^{\mr},c\uas\rho\rangle_C$ and ${}_A\Yr_D$ be distinguished $n$-exangles. Then $(\id_A,c)$ has a {\it good lift} $f^{\mr}$, in the sense that its mapping cone gives a distinguished $n$-exangle $\langle M^{\mr}_f,(d^X_0)\sas\rho\rangle$.
 \item[{\rm (EA2$\op$)}] Dual of {\rm (EA2)}.
\end{itemize}
Note that the case $n=1$, a triplet $(\C,\E,\s)$ is a  $1$-exangulated category if and only if it is an extriangulated category, see \cite[Proposition 4.3]{HLN}.
\end{definition}

\begin{example}
From \cite[Proposition 4.34]{HLN} and \cite[Proposition 4.5]{HLN},  we know that $n$-exact categories and $(n+2)$-angulated categories are $n$-exangulated categories.
There are some other examples of $n$-exangulated categories
 which are neither $n$-exact nor $(n+2)$-angulated, see \cite{HLN,LZ,HZZ2}.
\end{example}

\begin{lemma}\emph{\cite[Lemma 2.12]{LZ}}\label{a1}
Let $(\C,\E,\s)$ be an $n$-exangulated category, and
$$A_0\xrightarrow{\alpha_0}A_1\xrightarrow{\alpha_1}A_2\xrightarrow{\alpha_2}\cdots\xrightarrow{\alpha_{n-2}}A_{n-1}
\xrightarrow{\alpha_{n-1}}A_n\xrightarrow{\alpha_n}A_{n+1}\overset{\delta}{\dashrightarrow}$$
be a distinguished $n$-exangle. Then we have the following exact sequences:
$$\C(-, A_0)\xrightarrow{}\C(-, A_1)\xrightarrow{}\cdots\xrightarrow{}
\C(-, A_{n+1})\xrightarrow{}\E(-, A_{0})\xrightarrow{}\E(-, A_{1})\xrightarrow{}\E(-, A_{2});$$
$$\C(A_{n+1},-)\xrightarrow{}\C(A_{n},-)\xrightarrow{}\cdots\xrightarrow{}
\C(A_0,-)\xrightarrow{}\E(A_{n+1},-)\xrightarrow{}\E(A_{n},-)\xrightarrow{}\E(A_{n-1},-).$$
\end{lemma}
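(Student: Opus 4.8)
The plan is to prove the first exact sequence and to deduce the second by duality. The triplet $(\C\op,\E\op,\s\op)$ is again $n$-exangulated, the axioms being self-dual with (EA2$\op$) the exact dual of (EA2); under this duality the given distinguished $n$-exangle becomes the distinguished $n$-exangle $A_{n+1}\to A_n\to\cdots\to A_0\dra$ in $(\C\op,\E\op,\s\op)$. Applying the first sequence there and rewriting $\C\op(-,X)=\C(X,-)$ and $\E\op(-,X)=\E(X,-)$ recovers the second sequence verbatim, so I would treat only the first one.

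Exactness at each internal node $\C(-,A_1),\ldots,\C(-,A_{n+1})$ is precisely condition (1) of Definition \ref{def1} for the $n$-exangle $\langle A^{\mr},\del\rangle$, so the genuinely new content is exactness at the two terminal nodes $\E(-,A_0)$ and $\E(-,A_1)$. First I would record that consecutive composites vanish: for $f\in\C(T,A_{n+1})$ one has $(\alpha_0)\sas(f\uas\del)=f\uas\big((\alpha_0)\sas\del\big)=0$, since $(\alpha_0)\sas\del=0$ is one of the two defining conditions of the $\E$-attached complex $\langle A^{\mr},\del\rangle$; and $(\alpha_1)\sas\circ(\alpha_0)\sas=(\alpha_1\alpha_0)\sas=0$ because $A^{\mr}$ is a complex. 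Hence at both nodes the image lies in the kernel, and only the reverse inclusions are at issue.

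For exactness at $\E(-,A_0)$, fix $T\in\C$ and $\theta\in\E(T,A_0)$ with $(\alpha_0)\sas\theta=0$, and realize $\theta$ by a distinguished $n$-exangle $\langle Y^{\mr},\theta\rangle$ with $Y_0=A_0$ and $Y_{n+1}=T$. Evaluating the covariant defining sequence of $\langle Y^{\mr},\theta\rangle$ (Definition \ref{def1}(2)) at $A_1$, the vanishing $\theta\ush(\alpha_0)=(\alpha_0)\sas\theta=0$ forces $\alpha_0$ to factor through $d_0^Y$. This comparison datum on the leftmost terms is what the realization and good-lift axioms ((R0) and (EA2)) promote, inductively via the exactness of the $\C(Y_i,-)$ sequences, to a morphism of $n$-exangles $\langle Y^{\mr},\theta\rangle\to\langle A^{\mr},\del\rangle$ lifting $\id_{A_0}$; writing $h\in\C(T,A_{n+1})$ for its top component, the compatibility $(\id_{A_0})\sas\theta=h\uas\del$ defining a morphism in $\AE$ gives $\theta=\del\ssh(h)\in\operatorname{im}(\del\ssh)$, as required.

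The step I expect to be the main obstacle is exactness at $\E(-,A_1)$, the $n$-exangulated counterpart of the one place in the triangulated and extriangulated six-term sequences where the octahedral axiom is genuinely needed. Given $\epsilon\in\E(T,A_1)$ with $(\alpha_1)\sas\epsilon=0$, I would realize $\epsilon$ by a distinguished $n$-exangle and invoke (EA2) (or its dual) to splice it against $\langle A^{\mr},\del\rangle$: the good lift supplied by (EA2) has a mapping cone which is again a distinguished $n$-exangle, interpolating between the two and carrying $A_0$ and $A_2$ in the appropriate degrees. Reading condition (1) of Definition \ref{def1} off this mapping-cone $n$-exangle, and combining it with the factorization forced by $(\alpha_1)\sas\epsilon=0$, should yield $\eta\in\E(T,A_0)$ with $(\alpha_0)\sas\eta=\epsilon$. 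The difficulty here is not a single conceptual hurdle but the bookkeeping: one must set up the morphisms of extensions so that (EA2) applies on the nose and so that the degrees of the resulting mapping-cone complex align, in order that its defining exact sequence deliver exactly the preimage $\eta$.
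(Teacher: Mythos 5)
Everything up to and including exactness at $\E(-,A_0)$ is correct: the duality reduction, identifying the internal $\C$-nodes with Definition~\ref{def1}(1), the vanishing of the two consecutive composites, and the lifting argument at $\E(-,A_0)$ --- the last being precisely the proof of \cite[Proposition 3.6]{HLN}, and you could simply cite the paper's Lemma~\ref{a2} for the promotion of the square to a morphism of $n$-exangles rather than re-derive it from (R0) and (EA2). (Note the paper itself gives no proof of this lemma; it quotes \cite[Lemma 2.12]{LZ}, so the comparison must be with what a complete argument requires.)

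The genuine gap is your final step, and it is a missing idea rather than bookkeeping. As you describe it, (EA2) cannot be made to ``apply on the nose'' to $\langle A^{\mr},\delta\rangle$ and a realization of $\varepsilon$: axiom (EA2) accepts only a pair of distinguished $n$-exangles realizing $\rho$ and $c^{\ast}\rho$ --- same coefficient object, bases joined by $c$ --- whereas $\delta\in\E(A_{n+1},A_0)$ and $\varepsilon\in\E(T,A_1)$ have different coefficient objects and neither is a pullback of the other. The missing bridge is an auxiliary distinguished $n$-exangle your sketch never introduces: realizing $\varepsilon$ as $A_1\xrightarrow{\beta_0}B_1\to\cdots\to B_n\xrightarrow{\beta_n}T\overset{\varepsilon}{\dashrightarrow}$, the composite $\beta_0\alpha_0$ is an inflation by (EA1), hence sits in a distinguished $n$-exangle $\langle C^{\mr},\omega\rangle$ with $\omega\in\E(C_{n+1},A_0)$. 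Two applications of Lemma~\ref{a2} give $d\colon A_{n+1}\to C_{n+1}$ with $d^{\ast}\omega=\delta$ and $e\colon C_{n+1}\to T$ with $e^{\ast}\varepsilon=(\alpha_0)_{\ast}\omega$. Only now does (EA2) apply: first to the pair $(\omega,\delta=d^{\ast}\omega)$, producing a good lift $h^{\mr}$ whose cone is a distinguished realization of $(\alpha_0)_{\ast}\omega=e^{\ast}\varepsilon$ with first differential $\left[\begin{smallmatrix}-\alpha_1\\ h_1\end{smallmatrix}\right]$; since $h_1\alpha_0=\beta_0\alpha_0$, the contravariant exact sequence of $\langle A^{\mr},\delta\rangle$ gives $h_1-\beta_0=v\alpha_1$, so $(h_1)_{\ast}\varepsilon=(\beta_0)_{\ast}\varepsilon+v_{\ast}(\alpha_1)_{\ast}\varepsilon=0$ --- this is the one place your hypothesis $(\alpha_1)_{\ast}\varepsilon=0$ enters, together with the attached-complex identity $(\beta_0)_{\ast}\varepsilon=0$. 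A second application of (EA2), to the pair $(\varepsilon,e^{\ast}\varepsilon)$, yields a distinguished $n$-exangle with extension $0$ and last differential $[e,\ \beta_n]\colon C_{n+1}\oplus B_n\to T$; its defining exact sequence forces this map to be a split epimorphism, so $et+\beta_nr=\id_T$ for some $t,r$, and then $\eta:=t^{\ast}\omega$ works, since $(\alpha_0)_{\ast}\eta=t^{\ast}e^{\ast}\varepsilon=(\id_T-\beta_nr)^{\ast}\varepsilon=\varepsilon$ using $(\beta_n)^{\ast}\varepsilon=0$. This chain --- the composite inflation, the two comparison maps, two uses of (EA2), and the splitting --- is the $n$-exangulated surrogate for the octahedral axiom that you correctly sensed was needed, and none of it is present in, or recoverable by routine bookkeeping from, your proposal.
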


\begin{lemma}\emph{\cite[Proposition 3.5]{HLN}}\label{a2}
Let ${}_A\langle X^{\mr},\delta\rangle_C$ and ${}_B\langle Y^{\mr},\rho\rangle_D$ be distinguished $n$-exangles. Suppose that we are given a commutative square
$$\xymatrix{
 X^0 \ar[r]^{{d_X^0}} \ar@{}[dr]|{\circlearrowright} \ar[d]_{a} & X^1 \ar[d]^{b}\\
 Y^0  \ar[r]_{d_Y^0} &Y^1
}
$$
in $\C$. Then there is a morphism $f^{\mr}\colon\Xd\to\langle Y^{\mr},\rho\rangle$ which satisfies $f^0=a$ and $f^1=b$.
\end{lemma}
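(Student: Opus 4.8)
The plan is to read off what a morphism $f^{\mr}\co\Xd\to\Yr$ with $f^0=a$, $f^1=b$ must be: besides the prescribed $f^0,f^1$, one must build $f^2,\dots,f^{n+1}$ so that $f^{\mr}$ is a chain map and so that the compatibility $a_{\ast}\del=(f^{n+1})^{\ast}\rho$ required of a morphism in $\AE$ holds. The naive idea of extending the given square to a chain map one degree at a time, using the long exact $\Hom$-sequences of Lemma \ref{a1}, is exactly the step I expect to fail: it pins down the commuting squares but gives no control over $a_{\ast}\del=(f^{n+1})^{\ast}\rho$, which is where the real content sits. I would therefore secure the compatibility first, by producing a genuine morphism of extensions and invoking (R0), and only afterwards repair the low-degree data.

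\textbf{A morphism of extensions, then a lift.} First I would find $c\co X_{n+1}\to Y_{n+1}$ with $a_{\ast}\del=c^{\ast}\rho$. Pushing the class $a_{\ast}\del\in\E(X_{n+1},Y_0)$ forward along $d^Y_0$ and using commutativity of the square together with $(d^X_0)_{\ast}\del=0$ gives $(d^Y_0)_{\ast}(a_{\ast}\del)=(d^Y_0\circ a)_{\ast}\del=(b\circ d^X_0)_{\ast}\del=b_{\ast}(d^X_0)_{\ast}\del=0$. Evaluating at $X_{n+1}$ the first exact sequence of Lemma \ref{a1} applied to $\Yr$, whose relevant piece is $\C(X_{n+1},Y_{n+1})\xrightarrow{\rho\ssh}\E(X_{n+1},Y_0)\xrightarrow{(d^Y_0)_{\ast}}\E(X_{n+1},Y_1)$, shows that $a_{\ast}\del$ lies in the image of $\rho\ssh$, so there is $c$ with $c^{\ast}\rho=\rho\ssh(c)=a_{\ast}\del$. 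Thus $(a,c)\co\del\to\rho$ is a morphism of extensions, and (R0) supplies a lift $g^{\mr}=(a,g^1,\dots,g^n,c)\in\CC(X^{\mr},Y^{\mr})$; being a chain map with $a_{\ast}\del=c^{\ast}\rho$, it is already a morphism $\Xd\to\Yr$ in $\AE$. Its only defect is that $g^1$ need not equal $b$.

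\textbf{Repairing the degree-one term.} Comparing the degree-zero squares of $g^{\mr}$ and of the hypothesis gives $(b-g^1)\circ d^X_0=b\circ d^X_0-g^1\circ d^X_0=d^Y_0\circ a-d^Y_0\circ a=0$, so by exactness of $\C(X_2,Y_1)\xrightarrow{-\circ d^X_1}\C(X_1,Y_1)\xrightarrow{-\circ d^X_0}\C(X_0,Y_1)$ (Definition \ref{def1}) there is $t\co X_2\to Y_1$ with $b-g^1=t\circ d^X_1$. I would then modify $g^{\mr}$ by the null-homotopy concentrated in degree two determined by $t$: set $f^1=g^1+t\circ d^X_1=b$, $f^2=g^2+d^Y_1\circ t$, and $f^i=g^i$ for the remaining $i$; a direct check shows $f^{\mr}$ is again a chain map with $f^0=a$ and $f^1=b$. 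It remains to confirm the $\AE$-compatibility survives. For $n\ge 2$ the top term is untouched, $f^{n+1}=c$, so $(f^{n+1})^{\ast}\rho=c^{\ast}\rho=a_{\ast}\del$; the only delicate case is $n=1$, where the correction does reach the top term, $f^2=c+d^Y_1\circ t$, but then $(f^2)^{\ast}\rho=c^{\ast}\rho+t^{\ast}(d^Y_1)^{\ast}\rho=c^{\ast}\rho=a_{\ast}\del$ since $(d^Y_n)^{\ast}\rho=0$ for the object $\Yr$ of $\AE$. Hence $f^{\mr}\co\Xd\to\Yr$ is the required morphism, and the single genuine difficulty is the one flagged at the outset, circumvented by building the extension-compatibility in through (R0) before restoring $f^1=b$ by a homotopy.
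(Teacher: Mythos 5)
Your proof is correct and takes essentially the same route as the source this paper relies on for the lemma: the paper itself gives no proof, only the citation \cite[Proposition~3.5]{HLN}, and your argument is precisely that proof. Your three steps --- producing $c$ with $c^{\ast}\rho=a_{\ast}\delta$ from the extended exact sequence of Lemma \ref{a1}, lifting the morphism of extensions $(a,c)$ via (R0), and then restoring $f^1=b$ by a null-homotopy using the exactness in Definition \ref{def1}, with the $n=1$ case handled correctly via $(d^Y_n)^{\ast}\rho=0$ --- match the cited argument step for step.
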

\section{From $n$-exangulated categories to $(n+2)$-angulated categories}

For the convenience of the readers, we briefly recall the definition of an $(n+2)$-angulated category from \cite{GKO}. For more knowledge of $(n+2)$-angulated categories, see also \cite{BT}.

An $(n+2)$-$\Sigma^n$-$sequence$ in $\C$ is a sequence of objects and morphisms
$$A_0\xrightarrow{f_0}A_1\xrightarrow{f_1}A_2\xrightarrow{f_2}\cdots\xrightarrow{f_{n-1}}A_n\xrightarrow{f_n}A_{n+1}\xrightarrow{f_{n+1}}\Sigma^n A_0.$$
Its {\em left rotation} is the $(n+2)$-$\Sigma^n$-sequence
$$A_1\xrightarrow{f_1}A_2\xrightarrow{f_2}A_3\xrightarrow{f_3}\cdots\xrightarrow{f_{n}}A_{n+1}\xrightarrow{f_{n+1}}\Sigma^n A_0\xrightarrow{(-1)^{n}\Sigma^n f_0}\Sigma^n A_1.$$
A \emph{morphism} of $(n+2)$-$\Sigma^n$-sequences is  a sequence of morphisms $\varphi=(\varphi_0,\varphi_1,\cdots,\varphi_{n+1})$ such that the following diagram commutes
$$\xymatrix{
A_0 \ar[r]^{f_0}\ar[d]^{\varphi_0} & A_1 \ar[r]^{f_1}\ar[d]^{\varphi_1} & A_2 \ar[r]^{f_2}\ar[d]^{\varphi_2} & \cdots \ar[r]^{f_{n}}& A_{n+1} \ar[r]^{f_{n+1}}\ar[d]^{\varphi_{n+1}} & \Sigma^n A_0 \ar[d]^{\Sigma^n \varphi_0}\\
B_0 \ar[r]^{g_0} & B_1 \ar[r]^{g_1} & B_2 \ar[r]^{g_2} & \cdots \ar[r]^{g_{n}}& B_{n+1} \ar[r]^{g_{n+1}}& \Sigma^n B_0
}$$
where each row is an $(n+2)$-$\Sigma^n$-sequence. It is an {\em isomorphism} if $\varphi_0, \varphi_1, \varphi_2, \cdots, \varphi_{n+1}$ are all isomorphisms in $\C$.

\begin{definition}\cite[Definition 2.1]{GKO}
An $(n+2)$-\emph{angulated category} is a triple $(\C, \Sigma^n, \Theta)$, where $\C$ is an additive category, $\Sigma^n$ is an auto-equivalence of $\C$ ($\Sigma^n$ is called the $n$-suspension functor), and $\Theta$ is a class of $(n+2)$-$\Sigma^n$-sequences (whose elements are called $(n+2)$-angles), which satisfies the following axioms:
\begin{itemize}
\item[\textbf{(N1)}]
\begin{itemize}
\item[(a)] The class $\Theta$ is closed under isomorphisms, direct sums and direct summands.

\item[(b)] For each object $A\in\C$ the trivial sequence
$$ A\xrightarrow{1_A}A\rightarrow 0\rightarrow0\rightarrow\cdots\rightarrow 0\rightarrow \Sigma^nA$$
belongs to $\Theta$.

\item[(c)] Each morphism $f_0\colon A_0\rightarrow A_1$ in $\C$ can be extended to $(n+2)$-$\Sigma^n$-sequence: $$A_0\xrightarrow{f_0}A_1\xrightarrow{f_1}A_2\xrightarrow{f_2}\cdots\xrightarrow{f_{d-1}}A_n\xrightarrow{f_n}A_{n+1}\xrightarrow{f_{n+1}}\Sigma^n A_0.$$
\end{itemize}
\item[\textbf{(N2)}] An $(n+2)$-$\Sigma^n$-sequence belongs to $\Theta$ if and only if its left rotation belongs to $\Theta$.

\item[\textbf{(N3)}] Each solid commutative diagram
$$\xymatrix{
A_0 \ar[r]^{f_0}\ar[d]^{\varphi_0} & A_1 \ar[r]^{f_1}\ar[d]^{\varphi_1} & A_2 \ar[r]^{f_2}\ar@{-->}[d]^{\varphi_2} & \cdots \ar[r]^{f_{n}}& A_{n+1} \ar[r]^{f_{n+1}}\ar@{-->}[d]^{\varphi_{n+1}} & \Sigma^n A_0 \ar[d]^{\Sigma^n \varphi_0}\\
B_0 \ar[r]^{g_0} & B_1 \ar[r]^{g_1} & B_2 \ar[r]^{g_2} & \cdots \ar[r]^{g_{n}}& B_{n+1} \ar[r]^{g_{n+1}}& \Sigma^n B_0
}$$ with rows in $\Theta$, the dotted morphisms exist and give a morphism of  $(n+2)$-$\Sigma^n$-sequences.

\item[\textbf{(N4)}] In the situation of (N3), the morphisms $\varphi_2,\varphi_3,\cdots,\varphi_{n+1}$ can be chosen such that the mapping cone
$$A_1\oplus B_0\xrightarrow{\left(\begin{smallmatrix}
                                        -f_1&0\\
                                        \varphi_1&g_0
                                       \end{smallmatrix}
                                     \right)}
A_2\oplus B_1\xrightarrow{\left(\begin{smallmatrix}
                                        -f_2&0\\
                                        \varphi_2&g_1
                                       \end{smallmatrix}
                                     \right)}\cdots\xrightarrow{\left(\begin{smallmatrix}
                                        -f_{n+1}&0\\
                                        \varphi_{n+1}&g_n
                                       \end{smallmatrix}
                                     \right)} \Sigma^n A_0\oplus B_{n+1}\xrightarrow{\left(\begin{smallmatrix}
                                        -\Sigma^n f_0&0\\
                                        \Sigma^n\varphi_1&g_{n+1}
                                       \end{smallmatrix}
                                     \right)}\Sigma^nA_1\oplus\Sigma^n B_0$$
belongs to $\Theta$.
   \end{itemize}
From \cite{GKO}, we know that the classical triangulated categories are the special case $n=1$.

\end{definition}

\begin{definition}\label{b1} Let $(\mathscr{C},\mathbb{E},\mathfrak{s})$ be an $n$-exangulated category. Assume that $\mathscr{C}$ has the structure of an $(n+2)$-angulated category $(\C, \Sigma, \Theta)$. We say that this $(n+2)$-angulated structure is $\mathbb{E}$-\textit{compatible} if and only if for any $(n+2)$-angle
\[ X_0\xrightarrow{d^X_0}X_1\xrightarrow{d^X_1}\cdots\xrightarrow{d^X_{n-1}}X_n\xrightarrow{d^X_n}X_{n+1}\xrightarrow{d^X_{n+1}}\Sigma X_0,\]
we have that$$X_0\xrightarrow{d_0^X}X_1\xrightarrow{d_1^X}\cdots\xrightarrow{d_{n-2}^X}X_{n-1}
\xrightarrow{d_{n-1}^X}X_n\xrightarrow{d_n^X}X_{n+1}\overset{\delta}{\dashrightarrow}$$is a distinguished $n$-exangle for some $\del\in\E(X_{n+1},X_0)$.
\end{definition}

\begin{definition}\label{b1} Let $(\mathscr{C},\mathbb{E},\mathfrak{s})$ be an $n$-exangulated category and $X$ be any object in $\C$.
The morphism $X\to 0$ is called \emph{trivial inflation} if it can be embedded in a distinguished $n$-exangle
$$X\rightarrow0\rightarrow0\rightarrow\cdots\rightarrow0
\rightarrow Y\overset{}{\dashrightarrow}.$$
Dually the morphism $0\to X$ is called \emph{trivial deflation} if it can be embedded in a distinguished $n$-exangle
$$Z\rightarrow0\rightarrow0\rightarrow\cdots\rightarrow0
\rightarrow X\overset{}{\dashrightarrow}.$$
\end{definition}

\begin{remark}\label{remark}
In extriangulated categories, note that the inflation and the trivial inflation are the same, and the deflation and the trivial deflation are the same.
\end{remark}

 Our main result is the following.

\begin{theorem}\label{main}
 Let $(\mathscr{C}, \mathbb{E}, \mathfrak{s})$ be an $n$-exangulated category. Then $\mathscr{C}$ has an $\mathbb{E}$-compatible $(n+2)$-angulated structure $(\C, \Sigma, \Theta)$ if and only if for any object $ X \in \mathscr{C}$, the morphism $X \rightarrow 0$ is a trivial inflation and the morphism $0 \rightarrow X$ is a trivial deflation.
\end{theorem}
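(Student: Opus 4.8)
The plan is to prove the two implications separately, using the representability criterion \cite[Proposition 4.8]{HLN}---that an $n$-exangulated category admits an $(n+2)$-angulated structure exactly when $\E(-,-)\cong\C(-,\Sigma-)$ for some auto-equivalence $\Sigma$---as the conceptual bridge for the substantial direction. For the ``only if'' direction, suppose $\C$ carries an $\E$-compatible $(n+2)$-angulated structure $(\C,\Sigma,\Theta)$ and fix $X\in\C$. By axiom (N1)(b) the trivial sequence $X\xrightarrow{\id_X}X\to0\to\cdots\to0\to\Sigma X$ lies in $\Theta$, so by (N2) its left rotation
$$X\to0\to\cdots\to0\to\Sigma X\xrightarrow{(-1)^n\Sigma\id_X}\Sigma X$$
is an $(n+2)$-angle as well. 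Applying $\E$-compatibility to it produces a distinguished $n$-exangle $X\to0\to\cdots\to0\to\Sigma X\dashrightarrow$, which is precisely a witness that $X\to0$ is a trivial inflation. Running the same argument with $\Sigma^{-1}X$ in place of $X$ and rotating once yields the $(n+2)$-angle $\Sigma^{-1}X\to0\to\cdots\to0\to X\xrightarrow{\pm\id}X$, whose truncation is a distinguished $n$-exangle $\Sigma^{-1}X\to0\to\cdots\to0\to X\dashrightarrow$ witnessing that $0\to X$ is a trivial deflation.

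For the ``if'' direction, assume $X\to0$ is a trivial inflation and $0\to X$ is a trivial deflation for every $X$; first I would manufacture the suspension. Fixing a distinguished $n$-exangle $X\to0\to\cdots\to0\to\Sigma X\xrightarrow{\del_X}$ afforded by the inflation hypothesis (which defines the object $\Sigma X$) and feeding it into the first exact sequence of Lemma \ref{a1}, all intermediate terms vanish because they are of the form $\C(-,0)$, so the sequence collapses to an isomorphism $(\del_X)\ssh\colon\C(-,\Sigma X)\xrightarrow{\ \sim\ }\E(-,X)$. By the Yoneda lemma the object assignment $X\mapsto\Sigma X$ extends uniquely to a functor $\Sigma\colon\C\to\C$ for which $\del\ssh$ becomes a natural isomorphism $\C(-,\Sigma-)\cong\E(-,-)$ of bifunctors. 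Dually, feeding the deflation witness $TX\to0\to\cdots\to0\to X\xrightarrow{\epsilon_X}$ into the second exact sequence of Lemma \ref{a1} gives a natural isomorphism $(\epsilon_X)\ush\colon\C(TX,-)\cong\E(X,-)$, defining a functor $T$.

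To see that $\Sigma$ is an auto-equivalence I would exhibit $T$ as a quasi-inverse. Reading the deflation witness of $X$ through the \emph{first} exact sequence of Lemma \ref{a1} gives $\C(-,X)\cong\E(-,TX)\cong\C(-,\Sigma TX)$, whence $\Sigma TX\cong X$ by Yoneda; symmetrically, reading the inflation witness through the \emph{second} exact sequence gives $\C(X,-)\cong\E(\Sigma X,-)\cong\C(T\Sigma X,-)$, whence $T\Sigma X\cong X$. Verifying that these isomorphisms are natural upgrades them to $\Sigma T\cong\id_\C\cong T\Sigma$, so $\Sigma$ is an auto-equivalence with $\E(-,-)\cong\C(-,\Sigma-)$. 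Invoking \cite[Proposition 4.8]{HLN} then endows $\C$ with an $(n+2)$-angulated structure $(\C,\Sigma,\Theta)$; since in that construction $\Theta$ consists precisely of the sequences obtained from distinguished $n$-exangles by replacing $\del\in\E(X_{n+1},X_0)$ with the morphism $X_{n+1}\to\Sigma X_0$ corresponding to it under the isomorphism above, the truncation of every $(n+2)$-angle is a distinguished $n$-exangle, i.e.\ the structure is $\E$-compatible.

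The main obstacle is the ``if'' direction, and within it the passage from pointwise isomorphisms to a genuine auto-equivalence: one must check that $X\mapsto\Sigma X$ is functorial rather than merely defined on objects, that $\del\ssh$ is natural in \emph{both} variables, and that the candidate isomorphisms $\Sigma T\cong\id\cong T\Sigma$ are natural and not only objectwise. These are routine Yoneda bookkeeping once the vanishing of the intermediate Hom- and $\E$-terms in Lemma \ref{a1} is exploited, but they are where the care is concentrated; the final $\E$-compatibility is then immediate from the explicit shape of the $(n+2)$-angles produced by \cite[Proposition 4.8]{HLN}.
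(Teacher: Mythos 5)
Your overall skeleton matches the paper's: necessity by rotating the trivial $(n+2)$-angle and applying $\E$-compatibility to its truncation, sufficiency by fixing distinguished $n$-exangles $X\to 0\to\cdots\to 0\to\Sigma X\dashrightarrow$ and exploiting the collapse of the exact sequences of Lemma \ref{a1} into isomorphisms $\C(-,\Sigma X)\cong\E(-,X)$ and $\C(X,-)\cong\E(\Sigma X,-)$. Two of your variations are sound and arguably cleaner than the paper's: you get functoriality of $\Sigma$ by Yoneda (parametrized representability), where the paper constructs $\Sigma f_0$ by lifting through Lemma \ref{a2} and proves uniqueness by hand (Lemma \ref{lemma1}); and you exhibit an explicit quasi-inverse $T$ built from the trivial-deflation witnesses, where the paper instead proves $\Sigma$ is dense, full and faithful (Proposition \ref{proposition4}).

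The gap is at your final step. \cite[Proposition 4.8]{HLN} (Lemma \ref{lemma2} here) is not the representability criterion you paraphrase: its input is not ``an $n$-exangulated category with $\E\cong\C(-,\Sigma-)$'', but an exact realization of the specific bifunctor $\E_{\Sigma}=\C(-,\Sigma-)$ which makes $(\C,\E_{\Sigma},\mathfrak{r})$ itself an $n$-exangulated category. Having produced the natural isomorphism $\E\cong\C(-,\Sigma-)$, you must still transport the given realization $\s$ across it, i.e.\ define $\mathfrak{r}(\varepsilon):=\s(\varepsilon^{\ast}\delta_{X_0})$, and then prove two things: that $\mathfrak{r}$ is an exact realization of $\E_{\Sigma}$ (conditions (R0), (R1), (R2)), and that the triplet $(\C,\E_{\Sigma},\mathfrak{r})$ satisfies (EA1), (EA2) and (EA2)$^{\rm op}$. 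None of this is automatic from the isomorphism of bifunctors alone; for instance, verifying (EA2) requires showing that a good lift in $(\C,\E,\s)$ produces a good lift in $(\C,\E_{\Sigma},\mathfrak{r})$, which involves identifying the extension realized by the mapping cone on both sides -- exactly the computation with $\varepsilon^{\ast}\delta_{X_1}=(f_0)_{\ast}\delta^{\ast}\delta_A$ carried out in the paper. This transport-and-reverification is the content of Propositions \ref{proposition2} and \ref{proposition1}, and it constitutes the bulk of the paper's sufficiency proof; in your write-up it is compressed into the single word ``invoking'', justified by a statement of Proposition 4.8 that is stronger than what is actually proved in \cite{HLN}. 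What you do prove (the auto-equivalence and the bifunctor isomorphism) is the correct first half, but as written the appeal to Proposition 4.8 does not go through because its hypotheses have not been established.
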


\textbf{We first give the proof of necessity.}

\begin{proof}
Let $(\mathscr{C}, \mathbb{E}, \mathfrak{s})$ be an $n$-exangulated category with an $\mathbb{E}$-compatible $(n+2)$-angulated structure $(\C, \Sigma, \Theta).$ For the morphism $1_X \colon X \rightarrow X$ in $\mathscr{C},$ by axioms (N1) and (N2), there exists an $(n+2)$-angle$$X\rightarrow 0\rightarrow\cdots\rightarrow 0\rightarrow \Sigma X\rightarrow\Sigma X$$ in $\Theta.$ Since the $(n+2)$-angulated structure is $\mathbb{E}$-compatible, we have that$$X\rightarrow 0\rightarrow\cdots\rightarrow 0\rightarrow \Sigma X\overset{\delta}{\dashrightarrow}$$ is a distinguished $n$-exangle for some $\delta \in \mathbb{E}(\Sigma X,X)$. So the morphism $X\rightarrow 0$ is a
trivial inflation.

Since $\Sigma$ is an auto-equivalence, there exists a functor $\Sigma'\colon \mathscr{C} \rightarrow \mathscr{C}$ such that $\Sigma' \circ \Sigma \cong 1_{\mathscr{C}}$ and $\Sigma \circ \Sigma' \cong 1_{\mathscr{C}}$. For any object $X\in\mathscr{C}$, $\Sigma$ is dense, then there exists some $Y\in \mathscr{C}$ such that $\Sigma Y \cong X$. By axioms (N1) and (N2), we have the following $(n+2)$-angle $$ Y\rightarrow 0\rightarrow\cdots\rightarrow 0\rightarrow \Sigma Y\xrightarrow{(-1)^{n}1_{\Sigma Y}}\Sigma Y,$$
which is isomorphic to the $(n+2)$-$\Sigma$-sequence$$ Y\rightarrow 0\rightarrow\cdots\rightarrow 0\rightarrow \Sigma Y\xrightarrow{1_{\Sigma Y}}\Sigma Y.$$
Then the above $(n+2)$-$\Sigma$-sequence is also an $(n+2)$-angle because $\Theta $ is closed under isomorphisms.
Since $\Sigma Y \cong X, Y \cong \Sigma'X$ and $X \cong \Sigma \Sigma'X,$ we have the following isomorphisms respectively, $\varphi_1  \colon \Sigma Y \rightarrow X, \varphi_2  \colon Y \rightarrow \Sigma'X$ and $\varphi_3 \colon X\rightarrow \Sigma \Sigma'X.$ Consider the following commutative diagram
$$\xymatrix{
Y \ar[r]\ar[d]^{\varphi_2} & 0 \ar[r]\ar@{=}[d] & \cdots  \ar[r] & 0 \ar[r]\ar@{=}[d]& \Sigma Y \ar[r]^{1_{\Sigma Y}}\ar[d]^{\varphi_{1}} & \Sigma Y \ar[d]^{ \varphi_3\varphi_1}\\
\Sigma'X \ar[r] & 0\ar[r] & \cdots  \ar[r] & 0 \ar[r]& X \ar[r]^{\varphi_3}& \Sigma \Sigma'X.
}$$
It is obvious that the above diagram is an isomorphism of  $(n+2)$-$\Sigma$-sequences. The first row is an $(n+2)$-angle, so is the second row. Since $(\C, \Sigma, \Theta)$ is $\mathbb{E}$-compatible, we have that $$\Sigma'X\rightarrow 0\rightarrow\cdots\rightarrow0
\rightarrow X\overset{\delta'}{\dashrightarrow}$$ is a distinguished $n$-exangle for some $\delta \in \mathbb{E}(X,\Sigma'X)$, that is, $0 \rightarrow X$ is a trivial deflation.
\end{proof}

Before proving the sufficiency of Theorem \ref{main}, we need some preparations. From now to the end of this section, assume that for any object $X\in\mathscr{C}$, the morphism $X \rightarrow 0$ is a trivial inflation, and the morphism $0 \rightarrow X$ is a trivial deflation. We will construct an auto-equivalence $\Sigma \colon \mathscr{C} \xrightarrow{~\sim~}\mathscr{C}$. Suppose that $X$ is any object in $\mathscr{C}$. Since the morphism $X \rightarrow 0$ is a trivial inflation, there exists a distinguished $n$-exangle $$X\rightarrow0\rightarrow0\rightarrow\cdots\rightarrow0
\rightarrow Z\overset{\delta_{X}}{\dashrightarrow}.$$
We write $Z:=\Sigma X$ and fix the distinguished $n$-exangle
$$X\rightarrow0\rightarrow0\rightarrow\cdots\rightarrow0
\rightarrow \Sigma X\overset{\delta_{X}}{\dashrightarrow}.$$

\begin{lemma}\label{lemma1}
For a morphism $f_0\colon {X_0}\rightarrow{Y_0}$, there exists the following commutative diagram of distinguished $n$-exangles
$$\xymatrix{
X_0\ar[r]\ar@{}[dr] \ar[d]^{f_0} &0 \ar[r] \ar@{}[dr]\ar@{=}[d] &0 \ar[r] \ar@{}[dr]\ar@{-->}[d]&\cdot\cdot\cdot \ar[r]\ar@{}[dr] &0 \ar[r] \ar@{}[dr]\ar@{-->}[d]&\Sigma X_0 \ar@{}[dr]\ar@{-->}[d]^{\Sigma f_0} \ar@{-->}[r]^-{\delta_{X_0}} &\\
{Y_0}\ar[r] &0\ar[r]&0 \ar[r] &\cdot\cdot\cdot \ar[r] &0\ar[r]  &\Sigma Y_0 \ar@{-->}[r]^-{\delta_{Y_0}} &,}
$$ where the morphism ${\Sigma f_0}$ is unique in $\C$.
\begin{proof}
 The leftmost square is commutative, then we have the required commutative diagram of distinguished $n$-exangles by Lemma \ref{a2}. The following sequences
$$\C(\Sigma X_0, Y_0)\xrightarrow{}\cdots\xrightarrow{}
\C(0, Y_0)\xrightarrow{}\C( X_0, Y_0)\xrightarrow{((\del_{X_0})\ush)_{Y_0}}\E(\Sigma X_0, Y_0)\xrightarrow{}\E(0, Y_0)\xrightarrow{}\E(0, Y_0)$$ and
$$\C(\Sigma X_0,Y_0)\xrightarrow{}\cdots\xrightarrow{}
\C(\Sigma X_0,0)\xrightarrow{}\C(\Sigma X_0,\Sigma Y_0)\xrightarrow{(({\del_{Y_0}})\ssh)_{\Sigma X_0}}\E(\Sigma X_0,Y_0)\xrightarrow{}\E(\Sigma X_0,0)$$
are exact by Lemma \ref{a1}.

Since $$\C(0,Y_0)=0=\E(0,Y_0),~~ \C(\Sigma X_0,0)=0=\E(\Sigma X_0,0)$$ in $\Ab$, we have that$$(({\del_{Y_0}})\ssh)_{\Sigma X_0}\colon\C(\Sigma X_0,\Sigma Y_0)\to\E(\Sigma X_0,Y_0)\ :\ g_0\mapsto g_0\uas\del_{Y_0}$$ and
$$((\del_{X_0})\ush)_{Y_0}\colon\C( X_0, Y_0)\to\E(\Sigma X_0,Y_0)\ :\ f_0\mapsto {f_0}\sas\delta _{X_0}$$
are group isomorphisms. Considering the commutative diagram as above, we can suppose there is a morphism $\Sigma f_0 \colon \Sigma X_0 \rightarrow \Sigma Y_0$ such that $${f_0}\sas\delta _{X_0} =((\del_{X_0})\ush)_{Y_0}(f_0 )=(\Sigma f_0 \uas)\delta_{Y_0}={(({\del_{Y_0}})\ssh)_{\Sigma X_0}}(\Sigma f_0).$$ Since $(({\del_{Y_0}})\ssh)_{\Sigma X_0}$ and $  ((\del_{X_0})\ush)_{Y_0}$ are isomorphisms, $\Sigma f_0$ is the unique morphism in $\C$.
\end{proof}
\end{lemma}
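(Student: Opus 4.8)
The plan is to prove the two assertions separately: first that the leftmost square extends to a full morphism of the two fixed distinguished $n$-exangles (existence of $\Sigma f_0$), and then that the terminal component of any such morphism is forced (uniqueness of $\Sigma f_0$). Both halves should reduce to the standard exact-sequence machinery already recorded in Lemma \ref{a1} and Lemma \ref{a2}, together with the fact that every intermediate term of each $n$-exangle is the zero object.

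For existence I would observe that the leftmost square of the asserted diagram is
\[ \xymatrix{ X_0 \ar[r] \ar[d]_{f_0} & 0 \ar[d] \\ Y_0 \ar[r] & 0 } \]
in which the right vertical arrow is the unique morphism $0\to 0$; this square commutes trivially because its right-hand corner is the zero object. Applying Lemma \ref{a2} to the two fixed distinguished $n$-exangles realizing $\del_{X_0}$ and $\del_{Y_0}$, with $a=f_0$ and $b=0$, yields a morphism $f^{\mr}$ of $n$-exangles whose first component is $f_0$; its last component is the sought $\Sigma f_0$, and all intermediate components are automatically the zero maps $0\to 0$. This produces the required commutative diagram.

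For uniqueness the key point is that being a morphism of $n$-exangles forces the compatibility relation $(f_0)\sas\del_{X_0}=(\Sigma f_0)\uas\del_{Y_0}$ in $\E(\Sigma X_0,Y_0)$, so it suffices to show this equation has a unique solution in $\C(\Sigma X_0,\Sigma Y_0)$. I would feed the $n$-exangle realizing $\del_{Y_0}$ into Lemma \ref{a1} and evaluate the resulting long exact sequence at $\Sigma X_0$, obtaining exactness of
\[ \C(\Sigma X_0,0)\to\C(\Sigma X_0,\Sigma Y_0)\xrightarrow{((\del_{Y_0})\ssh)_{\Sigma X_0}}\E(\Sigma X_0,Y_0)\to\E(\Sigma X_0,0). \]
Since the flanking groups $\C(\Sigma X_0,0)$ and $\E(\Sigma X_0,0)$ both vanish, the middle map $g_0\mapsto g_0\uas\del_{Y_0}$ is an isomorphism; in particular it is injective, so $\Sigma f_0$ is pinned down by the displayed compatibility relation. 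Symmetrically, feeding the $\del_{X_0}$-exangle into Lemma \ref{a1} and evaluating at $Y_0$ shows that $f_0\mapsto(f_0)\sas\del_{X_0}$ is likewise an isomorphism $\C(X_0,Y_0)\to\E(\Sigma X_0,Y_0)$, which yields the explicit description $\Sigma f_0=((\del_{Y_0})\ssh)_{\Sigma X_0}^{-1}\bigl((f_0)\sas\del_{X_0}\bigr)$.

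The one step I would treat as the crux is the passage from mere exactness to the isomorphism statement: exactness alone only locates image and kernel, and it is the vanishing of the neighbouring groups that upgrades the comparison maps to isomorphisms. This is exactly why I need the \emph{extended} exact sequences of Lemma \ref{a1}, which run one term further into the $\E$-groups than the defining exact sequences of an $n$-exangle: injectivity comes from the vanishing $\C$-term on the left and surjectivity from the vanishing $\E$-term on the right. The hypothesis that $\Sigma X_0$ and $\Sigma Y_0$ arise from trivial inflations, so that every intermediate term of both $n$-exangles equals $0$, is precisely what forces all four of these groups to vanish and makes the assignment $f_0\mapsto\Sigma f_0$ well defined and unique.
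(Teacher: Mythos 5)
Your proposal is correct and follows essentially the same route as the paper's own proof: existence of the extension via Lemma \ref{a2} applied to the trivially commuting leftmost square, and uniqueness via the exact sequences of Lemma \ref{a1} together with the vanishing of the groups $\C(\Sigma X_0,0)$, $\C(0,Y_0)$, $\E(\Sigma X_0,0)$, $\E(0,Y_0)$, which upgrades the comparison maps $g_0\mapsto g_0\uas\del_{Y_0}$ and $f_0\mapsto (f_0)\sas\del_{X_0}$ to isomorphisms. Your closing observation that the extended sequences of Lemma \ref{a1} (rather than the shorter defining sequences of an $n$-exangle) are what deliver surjectivity is a fair articulation of a point the paper uses implicitly.
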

Moreover, by Lemma \ref{lemma1} we know that the $\Sigma 1_{X_0}$ is the unique morphism in $\mathscr{C}(\Sigma X_0, \Sigma X_0)$ such that $\delta _{X_0} = (\Sigma 1_{X_0})^{\ast}\delta _{X_0}$. Since  $(1_{\Sigma X_0})^{\ast}\delta _{X_0} = \delta _{X_0}$, we have  $\Sigma 1_{X_0} = 1_{\Sigma{X_0}}$. Suppose that $f_0 \colon X_0\rightarrow Y_0$ and $g_0 \colon Y_0 \rightarrow Z_0$ are two morphisms. Consider the following commutative diagram
$$\xymatrix{
X_0\ar[r]\ar@{}[dr] \ar[d]^{f_0} &0 \ar[r] \ar@{}[dr]\ar@{=}[d] &0 \ar[r] \ar@{}[dr]\ar@{-->}[d]&\cdot\cdot\cdot \ar[r]\ar@{}[dr] &0 \ar[r] \ar@{}[dr]\ar@{-->}[d]&\Sigma X_0 \ar@{}[dr]\ar@{-->}[d]^{\Sigma f_0} \ar@{-->}[r]^-{\delta_{X_0}} &\\Y_0\ar[r]\ar@{}[dr] \ar[d]^{g_0} &0 \ar[r] \ar@{}[dr]\ar@{=}[d] &0 \ar[r] \ar@{}[dr]\ar@{-->}[d]&\cdot\cdot\cdot \ar[r]\ar@{}[dr] &0 \ar[r] \ar@{}[dr]\ar@{-->}[d]&\Sigma Y_0 \ar@{}[dr]\ar@{-->}[d]^{\Sigma g_0} \ar@{-->}[r]^-{\delta_{Y_0}} &\\
{Z_0}\ar[r] &0\ar[r]&0 \ar[r] &\cdot\cdot\cdot \ar[r] &0\ar[r]  &\Sigma Z_0 \ar@{-->}[r]^-{\delta_{Z_0}} &.}
$$
We obtain that $(g_0 \circ f_0)_{\ast}\delta_{X_0} = (\Sigma g_0 \circ \Sigma f_0)^{\ast}\delta_{Z_0}.$ By Lemma \ref{lemma1}, we have that $\Sigma(g_0 \circ f_0)$ is the unique morphism in $\mathscr{C}(\Sigma X_0, \Sigma Z_0)$ such that  $(g_0 \circ f_0)_{\ast}\delta_{X_0} = (\Sigma(g_0 \circ f_0))^{\ast}\delta_{Z_0}$. This shows that $ \Sigma g_0 \circ \Sigma f_0 = \Sigma(g_0 \circ f_0)$.

\begin{proposition}
Let $(\mathscr{C}, \mathbb{E}, \mathfrak{s})$ be an $n$-exangulated category such that for any object $ X \in \mathscr{C}$, the morphism $X \rightarrow 0$ is a trivial inflation and the morphism $0 \rightarrow X$ is a trivial deflation. Suppose that $\Sigma \colon \mathscr{C} \rightarrow \mathscr{C}$ and $\Sigma^{\prime} \colon \mathscr{C} \rightarrow \mathscr{C}$ is two functors defined as above. Then $\Sigma$ and $\Sigma^{\prime}$ are naturally isomorphic.
\end{proposition}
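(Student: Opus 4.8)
The plan is to recognize that the stated proposition is a \emph{well-definedness up to natural isomorphism} result: both $\Sigma$ and $\Sigma'$ are produced by the same recipe (embedding each $X\to 0$ into a distinguished $n$-exangle of the form $X\to 0\to\cdots\to 0\to(\text{suspension})\dashrightarrow$), but with possibly different choices of realizing $n$-exangle. So for each object $X$ I fix the two distinguished $n$-exangles $X\to 0\to\cdots\to 0\to\Sigma X\overset{\delta_X}{\dashrightarrow}$ and $X\to 0\to\cdots\to 0\to\Sigma' X\overset{\delta'_X}{\dashrightarrow}$ used to define $\Sigma$ and $\Sigma'$, with $\delta_X\in\E(\Sigma X,X)$ and $\delta'_X\in\E(\Sigma' X,X)$. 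The goal is to produce isomorphisms $\eta_X\colon\Sigma X\to\Sigma' X$ that are natural in $X$.

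First I would construct $\eta_X$. Applying Lemma \ref{a2} to these two $n$-exangles together with the commutative square on the leftmost step having $f^0=1_X$, I obtain a morphism of $n$-exangles whose top component $\eta_X:=f^{n+1}\colon\Sigma X\to\Sigma' X$ satisfies $(1_X)\sas\delta_X=(\eta_X)\uas\delta'_X$, that is $\delta_X=(\eta_X)\uas\delta'_X$. Interchanging the roles of the two exangles gives $\eta'_X\colon\Sigma' X\to\Sigma X$ with $\delta'_X=(\eta'_X)\uas\delta_X$. The auxiliary fact I would record here, proved exactly as in Lemma \ref{lemma1}, is that since the flanking groups $\C(\Sigma X,0)$ and $\E(\Sigma X,0)$ vanish, the maps $((\delta_X)\ssh)_{\Sigma X}\colon\C(\Sigma X,\Sigma X)\to\E(\Sigma X,X)$ and $((\delta'_X)\ssh)_{\Sigma X}\colon\C(\Sigma X,\Sigma' X)\to\E(\Sigma X,X)$, $\phi\mapsto\phi\uas\delta_X$ and $\phi\mapsto\phi\uas\delta'_X$, are group isomorphisms (Lemma \ref{a1} and Definition \ref{def1}); in particular they are injective.

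Next I would check that $\eta_X$ and $\eta'_X$ are mutually inverse. Using functoriality of $\E$, $(\eta'_X\circ\eta_X)\uas\delta_X=(\eta_X)\uas(\eta'_X)\uas\delta_X=(\eta_X)\uas\delta'_X=\delta_X=(1_{\Sigma X})\uas\delta_X$, so injectivity of $((\delta_X)\ssh)_{\Sigma X}$ forces $\eta'_X\circ\eta_X=1_{\Sigma X}$; symmetrically $\eta_X\circ\eta'_X=1_{\Sigma' X}$. Hence each $\eta_X$ is an isomorphism.

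Finally, for naturality, given $f\colon X\to Y$ I must verify $\eta_Y\circ\Sigma f=\Sigma' f\circ\eta_X$ in $\C(\Sigma X,\Sigma' Y)$. Pulling $\delta'_Y$ back along each side and using the defining relations $(\Sigma f)\uas\delta_Y=f\sas\delta_X$, $(\Sigma' f)\uas\delta'_Y=f\sas\delta'_X$, $(\eta_Y)\uas\delta'_Y=\delta_Y$, $(\eta_X)\uas\delta'_X=\delta_X$, together with the commutation $c\uas a\sas=a\sas c\uas$ for the bifunctor $\E$, I find $(\eta_Y\circ\Sigma f)\uas\delta'_Y=(\Sigma f)\uas\delta_Y=f\sas\delta_X$ and $(\Sigma' f\circ\eta_X)\uas\delta'_Y=(\eta_X)\uas(f\sas\delta'_X)=f\sas\delta_X$, so both composites send $\delta'_Y$ to the same element $f\sas\delta_X\in\E(\Sigma X,Y)$. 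Since $Y\to 0\to\cdots\to 0\to\Sigma' Y\overset{\delta'_Y}{\dashrightarrow}$ is a distinguished $n$-exangle and $\C(\Sigma X,0)=0$, Definition \ref{def1}(1) makes $((\delta'_Y)\ssh)_{\Sigma X}\colon\C(\Sigma X,\Sigma' Y)\to\E(\Sigma X,Y)$ injective, which yields $\eta_Y\circ\Sigma f=\Sigma' f\circ\eta_X$. I expect the only delicate point to be this last step: keeping the $\E$-functoriality identities straight and making sure the injectivity is invoked at the correct object $\Sigma X$ (rather than at $\Sigma' Y$); the remainder is a direct application of Lemmas \ref{a1}, \ref{a2} and \ref{lemma1}.
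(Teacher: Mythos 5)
Your proposal is correct and follows essentially the same route as the paper: construct $\eta_X$ by applying Lemma \ref{a2} to the identity square, and use the group isomorphisms $(\delta_\sharp)$ from the Lemma \ref{lemma1} argument (injectivity coming from $\C(\Sigma X,0)=0=\E(\Sigma X,0)$) to get both invertibility and the naturality square. The only difference is presentational — you spell out the two-sided inverse and the element-level $\E$-functoriality computations that the paper compresses into citations of Lemma \ref{lemma1} and stacked diagrams.
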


\begin{proof}For any object $X_0\in\mathscr{C}$, we have the following commutative diagram
$$\xymatrix{
X_0\ar[r]\ar@{}[dr] \ar@{=}[d]&0 \ar[r] \ar@{}[dr]\ar@{=}[d] &0 \ar[r] \ar@{}[dr]\ar@{-->}[d]&\cdot\cdot\cdot \ar[r]\ar@{}[dr] &0 \ar[r] \ar@{}[dr]\ar@{-->}[d]&\Sigma X_0 \ar@{}[dr]\ar@{-->}[d]^{\eta_{X_0}} \ar@{-->}[r]^-{\delta_{X_0}} &\\
{X_0}\ar[r] &0\ar[r]&0 \ar[r] &\cdot\cdot\cdot \ar[r] &0\ar[r]  &\Sigma^{\prime} X_0 \ar@{-->}[r]^-{\delta^{\prime}_{X_0}} &
.}
$$
\ By Lemma \ref{lemma1}, we know that $\eta_{X_0}$ is an isomorphism in $\C$.

For any morphism $f_0:X_0\to Y_0$, we have the following two commutative diagrams
$$\xymatrix{
X_0\ar[r]\ar@{}[dr] \ar@{=}[d]&0 \ar[r] \ar@{}[dr]\ar@{=}[d] &0 \ar[r] \ar@{}[dr]\ar@{-->}[d]&\cdot\cdot\cdot \ar[r]\ar@{}[dr] &0 \ar[r] \ar@{}[dr]\ar@{-->}[d]&\Sigma X_0 \ar@{}[dr]\ar@{-->}[d]^{\eta_{X_0}} \ar@{-->}[r]^-{\delta_{X_0}} &\\X_0\ar[r]\ar@{}[dr] \ar[d]^{f_0}&0 \ar[r] \ar@{}[dr]\ar@{=}[d] &0 \ar[r] \ar@{}[dr]\ar@{-->}[d]&\cdot\cdot\cdot \ar[r]\ar@{}[dr] &0 \ar[r] \ar@{}[dr]\ar@{-->}[d]&\Sigma^{\prime} X_0 \ar@{}[dr]\ar@{-->}[d]^{\Sigma^{\prime}{f_0}} \ar@{-->}[r]^-{\delta^{\prime}_{X_0}} &\\
{Y_0}\ar[r] &0\ar[r]&0 \ar[r] &\cdot\cdot\cdot \ar[r] &0\ar[r]  &\Sigma^{\prime} Y_0 \ar@{-->}[r]^-{\delta^{\prime}_{Y_0}} &}
$$
and
$$\xymatrix{
X_0\ar[r]\ar@{}[dr]\ar@{}[dr] \ar[d]^{f_0} &0 \ar[r] \ar@{}[dr]\ar@{=}[d] &0 \ar[r] \ar@{}[dr]\ar@{-->}[d]&\cdot\cdot\cdot \ar[r]\ar@{}[dr] &0 \ar[r] \ar@{}[dr]\ar@{-->}[d]&\Sigma X_0 \ar@{}[dr]\ar@{-->}[d]^{\Sigma{f_0}} \ar@{-->}[r]^-{\delta_{X_0}} &\\Y_0\ar[r]\ar@{=}[d]&0 \ar[r] \ar@{}[dr]\ar@{=}[d] &0 \ar[r] \ar@{}[dr]\ar@{-->}[d]&\cdot\cdot\cdot \ar[r]\ar@{}[dr] &0 \ar[r] \ar@{}[dr]\ar@{-->}[d]&\Sigma Y_0 \ar@{}[dr]\ar@{-->}[d]^{\eta_{Y_0}} \ar@{-->}[r]^-{\delta_{Y_0}} &\\
{Y_0}\ar[r] &0\ar[r]&0 \ar[r] &\cdot\cdot\cdot \ar[r] &0\ar[r]  &\Sigma^{\prime} Y_0 \ar@{-->}[r]^-{\delta^{\prime}_{Y_0}} &.}
$$
By Lemma \ref{lemma1}, we have  $\Sigma^{\prime} f_0\circ\eta_{X_0}= \eta_{Y_0}\circ\Sigma f_0$, hence we have the following commutative diagram
$$\xymatrix{
 \Sigma X_0 \ar[r]^{\eta_{X_0}}_{\simeq} \ar@{}[dr]|{\circlearrowright} \ar[d]_{{\Sigma f_0}} &\Sigma^{\prime} X_0 \ar[d]^{\Sigma^{\prime} f_0}\\
\Sigma Y_0  \ar[r]_{\eta_{Y_0}}^{\simeq} &\Sigma^{\prime} Y_0.
}
$$
So $\eta$ defines a natural isomorphism, $\Sigma$ and $\Sigma^{\prime}$ are naturally isomorphic.
\end{proof}

\begin{remark}
Through the above discussion, we know that $\Sigma \colon \mathscr{C} \rightarrow \mathscr{C}$ is a well-defined functor.
\end{remark}

\begin{proposition}\label{proposition4} Let $(\mathscr{C}, \mathbb{E}, \mathfrak{s})$ be an $n$-exangulated category such that for any object $ X \in \mathscr{C}$, the morphism $X \rightarrow 0$ is a trivial inflation and the morphism $0 \rightarrow X$ is a trivial deflation. Assume that $\Sigma \colon \mathscr{C} \rightarrow \mathscr{C}$ as defined above. Then $\Sigma \colon \mathscr{C} \rightarrow \mathscr{C}$ is an auto-equivalence.
\end{proposition}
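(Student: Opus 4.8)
The plan is to exhibit an explicit quasi-inverse of $\Sigma$ constructed from trivial deflations, dual to the construction of $\Sigma$ from trivial inflations, and then to verify the two required natural isomorphisms by the very same uniqueness technique that was used in Lemma~\ref{lemma1} and in the Proposition proved just above. First I would build a candidate quasi-inverse $\Omega\colon\C\to\C$. For each object $X$, the hypothesis that $0\to X$ is a trivial deflation supplies a distinguished $n$-exangle
$$\Omega X\to 0\to\cdots\to 0\to X\overset{\gamma_X}{\dashrightarrow},$$
which I fix. The statement dual to Lemma~\ref{lemma1} (obtained by the same Hom/Ext exactness argument, now using $\C(-,0)=0=\E(-,0)$ together with $\C(0,-)=0=\E(0,-)$) shows that every $f\colon X\to Y$ induces a \emph{unique} morphism $\Omega f\colon\Omega X\to\Omega Y$ with $(\Omega f)_{\ast}\gamma_X=f^{\ast}\gamma_Y$, that $\Omega$ preserves identities and composition, and that the Proposition proved above (read dually) makes $\Omega$ independent of the chosen exangles up to natural isomorphism. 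Thus $\Omega$ is a well-defined functor.

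Next I would establish $\Sigma\circ\Omega\cong\id_{\C}$. The key observation is that the fixed exangle $\Omega X\to 0\to\cdots\to 0\to X\overset{\gamma_X}{\dashrightarrow}$ exhibits $\Omega X\to 0$ as a trivial inflation whose last term is $X$; hence it is one of the admissible trivial-inflation exangles associated with the object $\Omega X$. The exangle defining $\Sigma(\Omega X)$, namely $\Omega X\to 0\to\cdots\to 0\to\Sigma\Omega X\overset{\delta_{\Omega X}}{\dashrightarrow}$, is another such exangle for the same object $\Omega X$. Applying the Proposition proved above to these two choices gives an isomorphism $\Sigma\Omega X\xrightarrow{\sim}X$, natural in $X$. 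Dually, for $\Omega\circ\Sigma\cong\id_{\C}$ I would compare, for the object $\Sigma X$, the defining exangle $X\to 0\to\cdots\to 0\to\Sigma X\overset{\delta_X}{\dashrightarrow}$ (read as a trivial-deflation exangle with first term $X$) with the fixed exangle $\Omega\Sigma X\to 0\to\cdots\to 0\to\Sigma X\overset{\gamma_{\Sigma X}}{\dashrightarrow}$ defining $\Omega(\Sigma X)$; the dual of the Proposition above then yields a natural isomorphism $\Omega\Sigma X\xrightarrow{\sim}X$. Having both natural isomorphisms, $\Omega$ is a quasi-inverse to $\Sigma$, and therefore $\Sigma\colon\C\to\C$ is an auto-equivalence.

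The main obstacle, and essentially the only point requiring care, is the naturality of these two comparison isomorphisms; the fact that each comparison map is an \emph{isomorphism} is immediate from the group isomorphisms produced in Lemma~\ref{lemma1} once the squares are set up via Lemma~\ref{a2} with the identity on the shared object. For naturality of $\Sigma\Omega X\cong X$, given $f\colon X\to Y$ I would use that the morphism of defining exangles induced by $\Omega f$ on the first terms carries $\Sigma\Omega f$ on the last terms, while the same morphism, viewed on the exangles $\gamma_X,\gamma_Y$, carries precisely $f$ on the last terms (by the defining relation $(\Omega f)_{\ast}\gamma_X=f^{\ast}\gamma_Y$). The uniqueness clause of Lemma~\ref{lemma1} then forces the relevant square to commute, so that under the identification $\Sigma\Omega X\cong X$ the morphism $\Sigma\Omega f$ corresponds to $f$; the argument for $\Omega\Sigma\cong\id_{\C}$ is strictly dual. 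This is exactly the bookkeeping already carried out in the proof of the Proposition above, transported to the present situation, so no genuinely new computation is needed beyond keeping track of which term of each $n$-exangle plays the role of source and target.
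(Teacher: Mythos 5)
Your proposal is correct, but it takes a genuinely different route from the paper. The paper proves the statement by the standard criterion for equivalences: it first checks additivity ($\Sigma 0\cong 0$ and $\Sigma(X_0\oplus Y_0)\cong\Sigma X_0\oplus\Sigma Y_0$), then density (using the trivial-deflation hypothesis to produce a distinguished $n$-exangle $X_0\to 0\to\cdots\to 0\to Y_0\dashrightarrow$ and comparing it with $\delta_{X_0}$ as in Lemma \ref{lemma1}), then faithfulness (via the group isomorphism $((\delta_{X_0})\ush)_{Y_0}$ and the uniqueness clause of Lemma \ref{lemma1}), and finally fullness (via the dual of Lemma \ref{a2}, producing $f_0$ with $(f_0)_{\ast}\delta_{X_0}=g_0^{\ast}\delta_{Y_0}$ and hence $g_0=\Sigma f_0$). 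You instead construct an explicit quasi-inverse $\Omega$ from the trivial-deflation exangles $\gamma_X$ and verify $\Sigma\Omega\cong\id_{\C}$ and $\Omega\Sigma\cong\id_{\C}$. The ingredients are identical (the exact sequences of Lemma \ref{a1} giving group isomorphisms, Lemma \ref{a2} and its dual for existence of fills, and uniqueness for all identifications), but the work is packaged differently: your approach yields an explicit inverse functor and avoids the abstract ``fully faithful and dense implies equivalence'' criterion, at the cost of the naturality bookkeeping for the two comparison isomorphisms, which the paper's route sidesteps entirely; the paper's route additionally records additivity of $\Sigma$, which is not needed for the equivalence statement itself (an equivalence of additive categories is automatically additive) but is reused later for $\mathbf{E}^{1}=\C(-,\Sigma-)$. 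One small caveat: your appeal to the preceding Proposition to obtain $\Sigma\Omega X\cong X$ ``natural in $X$'' is loose as stated, since that Proposition compares two functors each given by a global choice of trivial-inflation exangles, and the assignment sending the object $\Omega X$ to the exangle $\gamma_X$ need not be such a choice when $\Omega$ identifies distinct objects. This does not harm the proof, because your direct argument in the last paragraph repairs it: with $\theta_X\colon\Sigma\Omega X\to X$ the lift of $(\id_{\Omega X},\theta_X)$ satisfying $\theta_X^{\ast}\gamma_X=\delta_{\Omega X}$, one computes $(f\circ\theta_X)^{\ast}\gamma_Y=(\Omega f)_{\ast}\delta_{\Omega X}=(\theta_Y\circ\Sigma\Omega f)^{\ast}\gamma_Y$, and injectivity of $g\mapsto g^{\ast}\gamma_Y$ forces the naturality square to commute, exactly as you indicate.
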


\begin{proof}
 We first show that $\Sigma$ is an additive functor. Since the following diagram commutes
$$\xymatrix{
0\ar[r]\ar@{}[dr]\ar@{=}[d]&0 \ar[r] \ar@{}[dr]\ar@{=}[d] &0 \ar[r] \ar@{}[dr]\ar@{-->}[d]&\cdot\cdot\cdot \ar[r]\ar@{}[dr] &0 \ar[r] \ar@{}[dr]\ar@{-->}[d]&0 \ar@{}[dr]\ar@{-->}[d] \ar@{-->}[r]^-{0} &\\
0\ar[r] &0\ar[r]&0 \ar[r] &\cdot\cdot\cdot \ar[r] &0\ar[r]  &\Sigma 0 \ar@{-->}[r]^-{\delta_{0}} &
,}
$$
we have $\Sigma 0 \cong 0$.
Let $X_0,Y_0$ be any pair of objects and consider the following distinguished $n$-exangle
$$\xymatrix{X_0\oplus Y_0 \ar[r]^{} &0 \ar[r]^{} \ar[r] &\cdot\cdot\cdot \ar[r]^{} &0 \ar[r]^{} \ar[r] &\Sigma (X_0\oplus Y_0) \ar@{-->}[r]^-{\delta_{X_0\oplus Y_0}} &.}$$
By \cite[Remark 2.14]{HLN}, we have that
$$\xymatrix{X_0\oplus Y_0 \ar[r]^{} &0 \ar[r]^{} \ar[r] &\cdot\cdot\cdot \ar[r]^{} &0 \ar[r]^{} \ar[r] &\Sigma X_0\oplus \Sigma Y_0 \ar@{-->}[r]^-{\delta_{X_0}\oplus\delta_{ Y_0}} &}$$
is a distinguished $n$-exangle.  Note that the following diagram
$$\xymatrix{
X_0\oplus Y_0\ar[r]\ar@{}[dr]\ar@{=}[d]&0 \ar[r] \ar@{}[dr]\ar@{=}[d] &0 \ar[r] \ar@{}[dr]\ar@{-->}[d]&\cdot\cdot\cdot \ar[r]\ar@{}[dr] &0 \ar[r] \ar@{}[dr]\ar@{-->}[d]&\Sigma (X_0\oplus Y_0)\ar@{}[dr]\ar@{-->}^{\eta}[d] \ar@{-->}[r]^-{{\delta_{X_0\oplus Y_0}}} &\\
X_0\oplus Y_0\ar[r] &0\ar[r]&0 \ar[r] &\cdot\cdot\cdot \ar[r] &0\ar[r]  &\Sigma X_0\oplus \Sigma Y_0\ar@{-->}[r]^-{{\delta_{X_0}\oplus \delta_{Y_0}}} &
}
$$
commutes. By Lemma \ref{lemma1}, $\eta$ is an isomorphism, so $\Sigma (X _0\oplus Y_0) \cong \Sigma X_0 \oplus \Sigma Y_0$. This shows that $\Sigma$ is an additive functor.

In order to show that $\Sigma$ is an equivalence, we need to show that $\Sigma$ is dense,
full and faithful.

We first prove that it is dense. Assume that $Y_0$ is any object in $\mathscr{C}$.
By hypothesis, the morphism $0 \rightarrow Y_0$ is a trivial deflation, so there exists a distinguished $n$-exangle
$$X_0\rightarrow0\rightarrow0\rightarrow\cdots\rightarrow0
\rightarrow Y_0\overset{\delta}{\dashrightarrow}.$$
Thus we obtain the following commutative diagram
$$\xymatrix{
X_0\ar[r]\ar@{}[dr]\ar@{=}[d]&0 \ar[r] \ar@{}[dr]\ar@{=}[d] &0 \ar[r] \ar@{}[dr]\ar@{-->}[d]&\cdot\cdot\cdot \ar[r]\ar@{}[dr] &0 \ar[r] \ar@{}[dr]\ar@{-->}[d]&\Sigma X_0\ar@{}[dr]\ar@{-->}[d]^{\eta} \ar@{-->}[r]^-{{\delta_{X_0}}} &\\
X_0\ar[r] &0\ar[r]&0 \ar[r] &\cdot\cdot\cdot \ar[r] &0\ar[r]  &Y_0 \ar@{-->}[r]^-{\delta} &.
}
$$
By Lemma \ref{lemma1}, the morphism $\eta \colon \Sigma X_0 \rightarrow Y_0$ is an isomorphism, so $\Sigma$ is dense.

Let $X_0,Y_0$ be a pair of objects in $\mathscr{C}$. Consider the map $\Sigma_{X_0,Y_0} \colon \mathscr{C}(X_0,Y_0) \rightarrow \mathscr{C}(\Sigma X_0, \Sigma Y_0),$ where $\Sigma_{X_0,Y_0}(f_0) = \Sigma f_0$. Assume $\Sigma f_0 = \Sigma f^{\prime}_0$,
by Lemma \ref{lemma1} we obtain that $${f_0}_{\ast}\delta_{X_0} = (\Sigma f_0)^{\ast}\delta_{Y_0} = (\Sigma {f^{\prime}_0})^{\ast}\delta_{Y_0} = {f^{\prime}_0}_{*}\delta_{X_0}.$$
Hence
 $$((\del_{X_0})\ush)_{Y_0}(f_0) = ((\del_{X_0})\ush)_{Y_0}({f^{\prime}_0}).$$ Since $((\del_{X_0})\ush)_{Y_0}$ is an isomorphism, we get that $f_0 = {f^{\prime}_0}$, that is, $\Sigma_{X_0,Y_0}$ is injective.

Let $g_0 \colon \Sigma X_0 \rightarrow \Sigma Y_0$ be any morphism in $\mathscr{C}(\Sigma X_0, \Sigma Y_0)$. We have the following commutative diagram
$$\xymatrix{
X_0\ar[r]\ar@{}[dr]\ar@{-->}[d]^{f_0}&0 \ar[r] \ar@{}[dr]\ar@{-->}[d] &0 \ar[r] \ar@{}[dr]\ar@{-->}[d]&\cdot\cdot\cdot \ar[r]\ar@{}[dr] &0 \ar[r] \ar@{}[dr]\ar[d]&\Sigma X_0\ar@{}[dr]\ar[d]^{g_0} \ar@{-->}[r]^-{{\delta_{X_0}}} &\\Y_0\ar[r] &0\ar[r]&0 \ar[r] &\cdot\cdot\cdot \ar[r] &0\ar[r]  &\Sigma Y_0 \ar@{-->}[r]^-{\delta_{Y_0}} &
}
$$
of distinguished $n$-exangles.
By the dual of Lemma \ref{a2}, there exists a morphism $f_0 \colon X_0 \rightarrow Y_0$ such that  ${f_0}_{\ast}\delta_{X_0} = {g^{\ast}_0}\delta_{Y_0}$. Since $\Sigma f_0$ is the unique morphism in $\mathscr{C}(\Sigma X_0, \Sigma Y_0)$ such that ${f_0}_{\ast}\delta_{X_0} = (\Sigma f_0)^{\ast}\delta_{Y_0}$, we have that $g_0 = \Sigma f_0$. This shows that the map $\Sigma_{X_0,Y_0}$ is surjective. Thus we conclude that $\Sigma$ is full and faithful. This completes the proof.
\end{proof}

Now let $\mathbf{E}^{1} \colon \mathscr{C}^{\text{op}} \times \mathscr{C} \rightarrow Ab$ be the bifunctor defined by $\mathbf{E}^{1}(-,-) := \mathscr{C}(-,\Sigma -).$ We will show that it is an additive bifunctor.

\begin{lemma}\label{lemma6} Let $(\mathscr{C}, \mathbb{E}, \mathfrak{s})$ be an $n$-exangulated category such that for any object $ X \in \mathscr{C}$, the morphism $X \rightarrow 0$ is a trivial inflation and the morphism $0 \rightarrow X$ is a trivial deflation. Assume that $X_0,Y_0$ is a pair of objects in $\mathscr{C}$. Then $\mathbf{E}^{1}(X_0,Y_0):=\mathscr{C}(X_0,\Sigma Y_0) \cong \mathbb{E}(X_0,Y_0)$.
\begin{proof}
For the distinguished $n$-exangle
$$Y_0\rightarrow0\rightarrow 0\rightarrow\cdots\rightarrow 0
\rightarrow\Sigma Y_0\overset{ \delta_{Y_0}}{\dashrightarrow},$$
the following sequence
$$\C(X_0,Y_0)\xrightarrow{}\C(X_0,0)\xrightarrow{}\cdots\xrightarrow{}
\C(X_0,0)\xrightarrow{}\C(X_0,\Sigma Y_0)\xrightarrow{({\del_{Y_0}}\ssh)_{X_0}}\E(X_0,Y_0)\xrightarrow{}\E(X_0,0)$$
 is exact by Lemma \ref{a1}. Since $\mathscr{C}(X_0,0)=0=\mathbb{E}(X_0,0),$ we obtain that $$({\del_{Y_0}}\ssh)_{X_0}\colon \mathscr{C}(X_0,\Sigma Y_0) \rightarrow \mathbb{E}(X_0,Y_0)$$ is an isomorphism.
\end{proof}

\end{lemma}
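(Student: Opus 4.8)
The plan is to deduce the claimed isomorphism directly from the long exact sequence attached to a single distinguished $n$-exangle, namely the one provided by the trivial-inflation hypothesis applied to $Y_0$. Since $Y_0\to 0$ is a trivial inflation, we have the fixed distinguished $n$-exangle
$$Y_0\to 0\to 0\to\cdots\to 0\to\Sigma Y_0\overset{\delta_{Y_0}}{\dashrightarrow},$$
all of whose middle terms vanish and whose final term is $\Sigma Y_0$. The entire argument rests on feeding this $n$-exangle into the contravariant exact sequence of Lemma \ref{a1} and testing against $X_0$.

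First I would instantiate the first exact sequence of Lemma \ref{a1} for the $n$-exangle above, with $A_0=Y_0$, $A_1=\cdots=A_n=0$ and $A_{n+1}=\Sigma Y_0$, and evaluate each functor at $X_0$. This produces an exact sequence of abelian groups whose crucial fragment reads
$$\C(X_0,0)\to\C(X_0,\Sigma Y_0)\xrightarrow{({\del_{Y_0}}\ssh)_{X_0}}\E(X_0,Y_0)\to\E(X_0,0),$$
where the connecting homomorphism $({\del_{Y_0}}\ssh)_{X_0}\colon f\mapsto f\uas\del_{Y_0}$ is exactly the map we must show is an isomorphism.

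Next I would observe that the two flanking groups collapse. Because $0$ is a zero object of $\C$ and both $\C(X_0,-)$ and $\E(X_0,-)$ are additive functors into $\Ab$, we have $\C(X_0,0)=0=\E(X_0,0)$. Thus the term immediately to the left of $\C(X_0,\Sigma Y_0)$ and the term immediately to the right of $\E(X_0,Y_0)$ both vanish. Exactness at $\C(X_0,\Sigma Y_0)$ then makes $({\del_{Y_0}}\ssh)_{X_0}$ injective (its kernel is the image of the zero group on the left), while exactness at $\E(X_0,Y_0)$ makes it surjective (its image is the kernel of a map into a zero group, hence all of $\E(X_0,Y_0)$). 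Combining these, $({\del_{Y_0}}\ssh)_{X_0}$ is the desired isomorphism $\mathbf{E}^{1}(X_0,Y_0)=\C(X_0,\Sigma Y_0)\xrightarrow{\ \sim\ }\E(X_0,Y_0)$.

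I do not anticipate any genuine obstacle; the only delicate point is the index bookkeeping in Lemma \ref{a1}, that is, checking that the two neighbours flanking the relevant portion are precisely $\C(X_0,A_n)=\C(X_0,0)$ and $\E(X_0,A_1)=\E(X_0,0)$, so that both are zero for every $n\ge 1$. If one later needs this identification to be natural in $X_0$ and $Y_0$ (in order to upgrade $\mathbf{E}^{1}$ to a bifunctor isomorphic to $\E$), that naturality would require a separate verification, but it lies outside the present object-wise statement.
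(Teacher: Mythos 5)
Your proposal is correct and follows essentially the same route as the paper: both feed the fixed distinguished $n$-exangle $Y_0\to 0\to\cdots\to 0\to\Sigma Y_0\overset{\delta_{Y_0}}{\dashrightarrow}$ into the first exact sequence of Lemma \ref{a1} evaluated at $X_0$, observe that $\C(X_0,0)=0=\E(X_0,0)$, and conclude that $({\del_{Y_0}}\ssh)_{X_0}$ is an isomorphism. Your explicit injectivity/surjectivity bookkeeping just spells out what the paper leaves implicit.
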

\begin{lemma}\label{lemma3} The functor $\mathbf{E}^{1} \colon \mathscr{C}^{\rm op} \times \mathscr{C} \rightarrow Ab$ is an additive bifunctor.
\end{lemma}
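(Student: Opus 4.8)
The plan is to recognize $\mathbf{E}^{1}$ as the composite of the ordinary Hom-bifunctor $\mathscr{C}\op\times\mathscr{C}\to\Ab$ with the functor $\Sigma$ applied in the second variable, and then to verify the bifunctor axioms and biadditivity directly. On objects we already have $\mathbf{E}^{1}(X_0,Y_0)=\mathscr{C}(X_0,\Sigma Y_0)$, which is an abelian group because $\mathscr{C}$ is additive. On morphisms, given $a\in\mathscr{C}(X_0',X_0)$ (viewed as a morphism $X_0\to X_0'$ in $\mathscr{C}\op$) and $b\in\mathscr{C}(Y_0,Y_0')$, I would set
$$\mathbf{E}^{1}(a,b)\colon\mathscr{C}(X_0,\Sigma Y_0)\to\mathscr{C}(X_0',\Sigma Y_0'),\qquad \varphi\mapsto \Sigma b\circ\varphi\circ a.$$
This is well defined since $a\colon X_0'\to X_0$ and $\Sigma b\colon\Sigma Y_0\to\Sigma Y_0'$, so the composite $\Sigma b\circ\varphi\circ a$ indeed lies in $\mathscr{C}(X_0',\Sigma Y_0')$.

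First I would check the functor axioms. Preservation of identities reads $\mathbf{E}^{1}(1_{X_0},1_{Y_0})(\varphi)=\Sigma 1_{Y_0}\circ\varphi\circ 1_{X_0}=\varphi$, using $\Sigma 1_{Y_0}=1_{\Sigma Y_0}$, which was established in the discussion following Lemma \ref{lemma1}. Compatibility with composition reduces to $\Sigma(b'\circ b)=\Sigma b'\circ\Sigma b$, also established there, together with associativity of composition in $\mathscr{C}$; chasing an element $\varphi$ through both composites then yields the required equality. This shows that $\mathbf{E}^{1}$ is a bifunctor $\mathscr{C}\op\times\mathscr{C}\to\Ab$.

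It then remains to verify additivity in each variable, and here the only genuine input is the additivity of $\Sigma$. Since $\mathscr{C}$ is additive, composition is biadditive, so for fixed $a$ and $b$ the map $\varphi\mapsto\Sigma b\circ\varphi\circ a$ is a group homomorphism; biadditivity of composition also gives $\mathbf{E}^{1}(a+a',b)=\mathbf{E}^{1}(a,b)+\mathbf{E}^{1}(a',b)$ directly. For the second variable the key point is $\Sigma(b+b')=\Sigma b+\Sigma b'$, which is part of the additivity of $\Sigma$ proved in Proposition \ref{proposition4}, and this gives $\mathbf{E}^{1}(a,b+b')=\mathbf{E}^{1}(a,b)+\mathbf{E}^{1}(a,b')$. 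There is no serious obstacle: every ingredient has already been secured, additivity of $\mathscr{C}$ by hypothesis and the functoriality and additivity of $\Sigma$ in the material leading up to Proposition \ref{proposition4}. The only point needing a little care is the contravariance in the first argument, that is, keeping track of the direction of $a$ so that $\Sigma b\circ\varphi\circ a$ always has the correct source and target.
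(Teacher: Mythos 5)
Your proof is correct, but it follows a genuinely different route from the paper's. The paper never works with the formula $\mathscr{C}(-,\Sigma-)$ directly: it invokes Lemma \ref{lemma6} to identify $\mathbf{E}^{1}(X_0,Y_0)\cong\mathbb{E}(X_0,Y_0)$ and then inherits additivity from the additive bifunctor $\mathbb{E}$, concluding $\mathbf{E}^{1}(X_0,0)\cong 0\cong \mathbf{E}^{1}(0,X_0)$ and $\mathbf{E}^{1}(X_0,Y_0\oplus Z_0)\cong \mathbf{E}^{1}(X_0,Y_0)\oplus\mathbf{E}^{1}(X_0,Z_0)$, and dually in the first variable; additivity is thus checked in the form of preservation of zero objects and direct sums. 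You instead verify everything at the level of Hom-groups from the explicit bifunctor structure $\varphi\mapsto \Sigma b\circ\varphi\circ a$, using $\Sigma 1_{X_0}=1_{\Sigma X_0}$ and $\Sigma(g_0\circ f_0)=\Sigma g_0\circ \Sigma f_0$ for functoriality, biadditivity of composition in the additive category $\mathscr{C}$ for additivity in the first variable, and additivity of $\Sigma$ on morphisms for the second. Your version buys more than the paper's: it records the bifunctor structure explicitly (which the paper leaves implicit in the definition $\mathbf{E}^{1}:=\mathscr{C}(-,\Sigma-)$) and it establishes Hom-level additivity directly rather than via direct-sum preservation. The one reference you should tighten is the appeal to Proposition \ref{proposition4} for $\Sigma(b+b')=\Sigma b+\Sigma b'$: what the proof of that proposition literally verifies is that $\Sigma$ preserves the zero object and direct sums up to isomorphism, which yields Hom-additivity only after the standard (but unstated) argument that an equivalence of additive categories is additive. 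A cleaner, self-contained source is the uniqueness statement of Lemma \ref{lemma1} combined with the biadditivity of $\mathbb{E}$: for $b,b'\colon Y_0\to Y_0'$ one has $(b+b')_{\ast}\delta_{Y_0}=b_{\ast}\delta_{Y_0}+b'_{\ast}\delta_{Y_0}=(\Sigma b)^{\ast}\delta_{Y_0'}+(\Sigma b')^{\ast}\delta_{Y_0'}=(\Sigma b+\Sigma b')^{\ast}\delta_{Y_0'}$, whence $\Sigma(b+b')=\Sigma b+\Sigma b'$ by uniqueness. With that substitution your argument is complete.
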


\begin{proof}
By Lemma \ref{lemma6}, we get that $\mathbf{E}^{1}(X_0,0) \cong \mathbb{E}(X_0,0) \cong {0} \cong \mathbb{E}(0,X_0) \cong \mathbf{E}^{1}(0,X_0)$ and $\mathbf{E}^{1}(X_0,Y_0 \oplus Z_0) \cong \mathbb{E}(X_0,Y_0 \oplus Z_0) \cong \mathbb{E}(X_0,Y_0) \oplus \mathbb{E}(X_0,Z_0) \cong \mathbf{E}^{1}(X_0,Y_0) \oplus \mathbf{E}^{1}(X_0,Z_0)$. This shows that $\mathbf{E}^{1}$ is additive in the second argument. By dually, it is also additive in the first argument. Hence $\mathbf{E}^{1}$ is an additive bifunctor.
\end{proof}

\begin{lemma} Let $(\mathscr{C}, \mathbb{E}, \mathfrak{s})$ be an $n$-exangulated category such that for every object $ X \in \mathscr{C}$, the morphism $X \rightarrow 0$ is a trivial inflation and the morphism $0 \rightarrow X$ is a trivial deflation. Assume that $Z$ is an object in $\mathscr{C}$ and $f_0 \colon X_0 \rightarrow Y_0$ is a morphism in $\mathscr{C}$. Then we obtain that $\mathbf{E}^{1}(Z,f_0) \cong \mathbb{E}(Z,f_0)$ and $\mathbf{E}^{1}({f_0^{\rm op}},Z) \cong \mathbb{E}({f_0^{\rm op}},Z)$ in the category of morphisms of $\Ab$.
\end{lemma}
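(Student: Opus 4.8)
The plan is to unwind what it means for two group homomorphisms to be isomorphic \emph{as objects of the category of morphisms of} $\Ab$: one must produce a commutative square whose two horizontal arrows are the given homomorphisms and whose two vertical arrows are isomorphisms of abelian groups. For the first assertion the two homomorphisms in question are $\mathbf{E}^{1}(Z,f_0)=(\Sigma f_0)\sas\colon\mathscr{C}(Z,\Sigma X_0)\to\mathscr{C}(Z,\Sigma Y_0)$ and $\mathbb{E}(Z,f_0)=(f_0)\sas\colon\mathbb{E}(Z,X_0)\to\mathbb{E}(Z,Y_0)$, and Lemma \ref{lemma6} already supplies the two required vertical isomorphisms $({\del_{X_0}}\ssh)_Z$ and $({\del_{Y_0}}\ssh)_Z$. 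Thus everything reduces to verifying that the resulting square commutes; the second assertion is handled analogously, using instead the isomorphisms $({\del_Z}\ssh)_{X_0}$ and $({\del_Z}\ssh)_{Y_0}$ from the same lemma.

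For the first claim I would chase a general element $g\in\mathscr{C}(Z,\Sigma X_0)$ around the square. Going across the top and then down sends
$$g\ \longmapsto\ (\Sigma f_0\circ g)\uas\del_{Y_0}=g\uas(\Sigma f_0)\uas\del_{Y_0},$$
while going down and then across the bottom sends
$$g\ \longmapsto\ (f_0)\sas(g\uas\del_{X_0})=g\uas\big((f_0)\sas\del_{X_0}\big),$$
where the last equality uses that $(f_0)\sas$ and $g\uas$ commute since $\mathbb{E}$ is a bifunctor. The two outputs coincide precisely because $\Sigma f_0$ was constructed in Lemma \ref{lemma1} so that $(f_0)\sas\del_{X_0}=(\Sigma f_0)\uas\del_{Y_0}$, i.e.\ so that $(f_0,\Sigma f_0)$ is a morphism of extensions $\del_{X_0}\to\del_{Y_0}$; applying $g\uas$ to both sides of this defining identity yields commutativity of the square, and hence the first isomorphism.

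For the second claim the two homomorphisms are $\mathbf{E}^{1}(f_0\op,Z)=(f_0)\uas\colon\mathscr{C}(Y_0,\Sigma Z)\to\mathscr{C}(X_0,\Sigma Z)$ and $\mathbb{E}(f_0\op,Z)=(f_0)\uas\colon\mathbb{E}(Y_0,Z)\to\mathbb{E}(X_0,Z)$, and here the vertical isomorphisms supplied by Lemma \ref{lemma6} are the two components $({\del_Z}\ssh)_{Y_0}$ and $({\del_Z}\ssh)_{X_0}$ of \emph{one and the same} natural transformation ${\del_Z}\ssh\colon\mathscr{C}(-,\Sigma Z)\Rightarrow\mathbb{E}(-,Z)$ induced by $\del_Z$ via the Yoneda lemma. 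Consequently the square to be checked is literally the naturality square of ${\del_Z}\ssh$ evaluated at $f_0\colon X_0\to Y_0$, so its commutativity is automatic. I expect no serious obstacle: the only genuine content lies in the first claim, namely recognizing that the defining equation of $\Sigma f_0$ from Lemma \ref{lemma1} is exactly the compatibility condition that forces the relevant square to commute, while the second claim is pure naturality.
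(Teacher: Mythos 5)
Correct, and essentially the paper's own argument: both proofs take the isomorphisms $((\delta_{X_0})_{\sharp})_Z$, $((\delta_{Y_0})_{\sharp})_Z$ (resp.\ $((\delta_Z)_{\sharp})_{X_0}$, $((\delta_Z)_{\sharp})_{Y_0}$) from Lemma \ref{lemma6} as the vertical maps and reduce everything to commutativity of the resulting square, which you verify by an element chase resting on the defining identity $(f_0)_{\ast}\delta_{X_0}=(\Sigma f_0)^{\ast}\delta_{Y_0}$ of $\Sigma f_0$ from Lemma \ref{lemma1} --- precisely the check the paper compresses into ``it is not hard to check''. Your further observation that the second square is literally the naturality square of $(\delta_Z)_{\sharp}$ (where the paper just says ``dually'') is accurate and a slightly cleaner way to dispose of that half.
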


\begin{proof}
By Lemma \ref{lemma6}, we obtain that the map $(({\delta_{{X_0}})\ssh})_{Z}: \mathbf{E}^{1}(Z,X_0) \rightarrow \mathbb{E}(Z,X_0)$, where $(({\delta_{{X_0}})\ssh})_{Z}(\varepsilon) = \varepsilon^{*}\delta_{X_0},$ and the map $(({\delta_{{Y_0}})\ssh})_{Z} : \mathbf{E}^{1}(Z,Y_0) \rightarrow \mathbb{E}(Z,Y_0),$ where $(({\delta_{{Y_0}})\ssh})_{Z}(\varepsilon) = \varepsilon^{\ast}\delta_{Y_0}$, are group isomorphisms. Moreover, it is not hard to check that the following diagram
$$\xymatrix@C=1.4cm{
 \mathbf{E}^{1}{(Z,X_0)} \ar[r]^{{\mathbf{E}^{1}(Z,f_0)}} \ar{}\ar[d]_{{((\delta_{X_0}){\ssh})_{Z}}} & \mathbf{E}^{1}(Z,Y_0)\ar[d]^{{((\delta_{Y_0}){\ssh})_{Z}}}\\
\mathbf{E}(Z,X_0)  \ar[r]_{{\mathbf{E}(Z,f_0)}} &\mathbf{E}(Z,Y_0)
}
$$
commutes for any $X_0,Y_0$ in $\C$. So $\mathbf{E}^{1}(Z,f_0) \cong \mathbb{E}(Z,f_0)$ in the category of morphisms of $\Ab$. Dually we prove the remaining statement.
\end{proof}

Now we define a correspondence $\mathfrak{r}$ for the category $\mathscr{C}$ equipped with the additive bifunctor $\mathbf{E}^{1}$, which will associate an equivalence class $\mathfrak{r}(\varepsilon)$ to any extension $\varepsilon \in \mathbf{E}^{1}(C,A)$.
Assume that $A,C$ is a pair of objects in $\mathscr{C}$, By Lemma \ref{lemma6}, we have $\mathbf{E}^{1}(C,A) \cong \mathbb{E}(C,A)$. Hence any $\varepsilon$ corresponds to $\varepsilon^{\ast}\delta_{A} \in \mathbb{E}(C,A)$. Therefore, we denote $\mathfrak{r}(\varepsilon):= \mathfrak{s}(\varepsilon^{\ast}\delta_{A})$.

\begin{proposition}\label{proposition2} Let $(\mathscr{C}, \mathbb{E}, \mathfrak{s})$ be an $n$-exangulated category such that for any object $ X \in \mathscr{C}$, the morphism $X\rightarrow 0$ is a trivial inflation and the morphism $0 \rightarrow X$ is a trivial deflation. Assume that $\mathfrak{r}$ is the correspondence which associates the equivalence class $\mathfrak{r}(\varepsilon) = \mathfrak{s}(\varepsilon^{\ast}\delta_{X_0})$ to any $\mathbf{E}^{1}$-extension $\varepsilon \in \mathbf{E}^{1}(Y_0,X_0)$, for any pair of objects $X_0,Y_0$ in $\mathscr{C}$. Then $\mathfrak{r}$ is an exact realization of $\mathbf{E}^{1}$.
\end{proposition}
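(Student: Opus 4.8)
The plan is to transport every axiom of an exact realization from $\mathfrak{s}$ to $\mathfrak{r}$ across the isomorphism $\mathbf{E}^1(Y_0,X_0)\cong\mathbb{E}(Y_0,X_0)$, $\varepsilon\mapsto\varepsilon^\ast\delta_{X_0}$, of Lemma \ref{lemma6}, the only nonformal input being the defining relation of the suspension on morphisms from Lemma \ref{lemma1}, namely $a_\ast\delta_{X_0}=(\Sigma a)^\ast\delta_{X_0'}$ for $a\colon X_0\to X_0'$. First note that $\mathfrak{r}$ is well defined: it sends $\varepsilon\in\mathbf{E}^1(Y_0,X_0)$ to the homotopy equivalence class $\mathfrak{s}(\varepsilon^\ast\delta_{X_0})$, which realizes $\varepsilon^\ast\delta_{X_0}\in\mathbb{E}(Y_0,X_0)$ by a complex with end terms $X_0$ and $Y_0$, precisely the shape demanded of a realization of $\varepsilon$.

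For (R0) I would take a morphism $(a,c)\colon\varepsilon\to\varepsilon'$ of $\mathbf{E}^1$-extensions and show it is simultaneously a morphism $\varepsilon^\ast\delta_{X_0}\to\varepsilon'^\ast\delta_{X_0'}$ of $\mathbb{E}$-extensions. Unwinding the $\mathbf{E}^1$-structure $\mathbf{E}^1(-,-)=\mathscr{C}(-,\Sigma-)$, the hypothesis $a_\ast\varepsilon=c^\ast\varepsilon'$ reads $(\Sigma a)\circ\varepsilon=\varepsilon'\circ c$, and then bifunctoriality together with Lemma \ref{lemma1} gives
$$a_\ast(\varepsilon^\ast\delta_{X_0})=\varepsilon^\ast(a_\ast\delta_{X_0})=\varepsilon^\ast((\Sigma a)^\ast\delta_{X_0'})=((\Sigma a)\circ\varepsilon)^\ast\delta_{X_0'}=(\varepsilon'\circ c)^\ast\delta_{X_0'}=c^\ast(\varepsilon'^\ast\delta_{X_0'}),$$
which is the required equality. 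As $\mathfrak{s}$ is a realization it yields a lift $f^{\mr}=(a,f_1,\dots,f_n,c)$ of $(a,c)$; being merely a morphism of complexes with prescribed outer components, the same $f^{\mr}$ is a lift of $(a,c)\colon\varepsilon\to\varepsilon'$ for $\mathfrak{r}$, establishing (R0).

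The heart of the argument is (R1). Given $\mathfrak{r}(\varepsilon)=[X^{\mr}]=\mathfrak{s}(\varepsilon^\ast\delta_{X_0})$, axiom (R1) for $\mathfrak{s}$ tells us $\langle X^{\mr},\varepsilon^\ast\delta_{X_0}\rangle$ is an $n$-exangle for $\mathbb{E}$, and I would show the two defining exact sequences for $\langle X^{\mr},\varepsilon\rangle$ coincide with these up to the natural isomorphism of Lemma \ref{lemma6}. For the contravariant sequence, writing $\theta\colon\mathbf{E}^1(-,X_0)\xrightarrow{\sim}\mathbb{E}(-,X_0)$, $\eta\mapsto\eta^\ast\delta_{X_0}$, one checks $\theta\circ\varepsilon_\sharp=(\varepsilon^\ast\delta_{X_0})_\sharp$, since for $f\colon W\to X_{n+1}$,
$$\theta(\varepsilon_\sharp f)=(\varepsilon\circ f)^\ast\delta_{X_0}=f^\ast(\varepsilon^\ast\delta_{X_0})=(\varepsilon^\ast\delta_{X_0})_\sharp f.$$
For the covariant sequence one uses the companion isomorphism $\theta'\colon\mathbf{E}^1(X_{n+1},-)\xrightarrow{\sim}\mathbb{E}(X_{n+1},-)$, $\eta\mapsto\eta^\ast\delta_{(-)}$, and the relation $(\Sigma g)^\ast\delta_Y=g_\ast\delta_{X_0}$ of Lemma \ref{lemma1} to obtain, for $g\colon X_0\to Y$,
$$\theta'(\varepsilon^\sharp g)=((\Sigma g)\circ\varepsilon)^\ast\delta_Y=\varepsilon^\ast(g_\ast\delta_{X_0})=g_\ast(\varepsilon^\ast\delta_{X_0})=(\varepsilon^\ast\delta_{X_0})^\sharp g.$$
Since $\theta$ and $\theta'$ are natural isomorphisms (their naturality in each variable being exactly what was verified just before the definition of $\mathfrak{r}$), the two sequences attached to $\varepsilon$ are exact if and only if those attached to $\varepsilon^\ast\delta_{X_0}$ are; the latter hold by (R1) for $\mathfrak{s}$. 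Evaluating the same two identities at $d_0^X$ and $d_n^X$ yields $(d_0^X)_\ast\varepsilon=0$ and $(d_n^X)^\ast\varepsilon=0$, so $\langle X^{\mr},\varepsilon\rangle$ satisfies the two compatibility conditions and is therefore an $n$-exangle for $\mathbf{E}^1$.

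Finally (R2) is immediate: for any $A$ the split extension $0\in\mathbf{E}^1(0,A)$ has $0^\ast\delta_A=0$, whence $\mathfrak{r}(0)=\mathfrak{s}(0)$, and (R2) for $\mathfrak{s}$ returns the required class $[A\xrightarrow{\id_A}A\to0\to\cdots\to0]$; the dual case is identical. I expect the main obstacle to be (R1), and within it the covariant comparison $\theta'\circ\varepsilon^\sharp=(\varepsilon^\ast\delta_{X_0})^\sharp$: whereas the contravariant side is purely formal, the covariant side genuinely requires the defining property of $\Sigma$ on morphisms in order to commute $\delta$ past $\Sigma g$, and it also leans on having already established the naturality of the comparison $\mathbf{E}^1\cong\mathbb{E}$ in each variable.
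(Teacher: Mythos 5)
Your proposal is correct and follows essentially the same route as the paper: transport (R0), (R1), (R2) from $\mathfrak{s}$ to $\mathfrak{r}$ through the isomorphism $\varepsilon\mapsto\varepsilon^{\ast}\delta_{X_0}$ of Lemma \ref{lemma6}, using the relation $a_{\ast}\delta_{X_0}=(\Sigma a)^{\ast}\delta_{X_0'}$ from Lemma \ref{lemma1} for (R0) and comparing the defining exact sequences via $(\delta_{X_0})_{\sharp}$ for (R1). Your explicit treatment of the covariant sequence (and the observation that it, unlike the contravariant one, genuinely needs Lemma \ref{lemma1}) is precisely the step the paper compresses into ``by the similar argument as above,'' so if anything your write-up is slightly more complete.
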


\begin{proof}
Let $\varepsilon \in \mathbf{E}^{1}(Y_0,X_0)$ and $\varepsilon^{\prime} \in \mathbf{E}^{1}({Y^{\prime}_0},{X^{\prime}_0})$ be two $\mathbf{E}^{1}$-extensions with $$\mathfrak{r}(\varepsilon) = [X_0 \overset{}{\longrightarrow} X_1{\longrightarrow}\cdots {\longrightarrow} X_n \overset{}{\longrightarrow}Y_0]~\mbox{ and}~ \mathfrak{r}(\varepsilon^{\prime}) = [{X^{\prime}_0}\overset{}{\longrightarrow} {X^{\prime}_1}{\longrightarrow}\cdots {\longrightarrow} {X^{\prime}_n} \overset{}{\longrightarrow}{Y^{\prime}_0}].$$
Assume that we have a morphism of $\mathbf{E}^{1}$-extensions $(f_0,g_0) \colon \varepsilon \rightarrow \varepsilon^{\prime}$, namely, $$\mathbf{E}^{1}(Y_0,f_0)(\varepsilon)=\Sigma f_0\circ \varepsilon = \mathbf{E}^{1}(g_0,{X^{\prime}_0})(\varepsilon^{\prime})= \varepsilon^{\prime} \circ g_0.$$ Since $\mathfrak{r}(\varepsilon) = \mathfrak{s}(\varepsilon^{\ast}\delta_{X_0})$ and $\mathfrak{r}(\varepsilon^{\prime}) = \mathfrak{s}((\varepsilon^{\prime})^{\ast}\delta_{{X^{\prime}_0}})$, we have the following diagram
$$\xymatrix{
X_0\ar[r]\ar@{}[dr] \ar[d]^{f_0} &X_1 \ar[r] \ar@{}[dr]\ar@{-->}[d]^{f_1} &X_2 \ar[r] \ar@{}[dr]\ar@{-->}[d]^{f_2}&\cdot\cdot\cdot \ar[r]\ar@{}[dr] &X_n \ar[r] \ar@{}[dr]\ar@{-->}[d]^{f_n}&Y_0 \ar@{}[dr]\ar[d]^{g_0} \ar@{-->}[r]^-{\varepsilon^{\ast}\delta_{X_0}} &\\
{X^{\prime}_0}\ar[r] &{X^{\prime}_1}\ar[r]&{X^{\prime}_2} \ar[r] &\cdot\cdot\cdot \ar[r] &{X^{\prime} _n}\ar[r]  &{Y^{\prime}_0} \ar@{-->}[r]^-{(\varepsilon^{\prime})^{\ast}\delta_{{X^{\prime}_0}}} &}
$$
of distinguished $n$-exangles.
Since ${f_0}_{\ast}\delta_{X_0} = (\Sigma f_0)^{\ast}\delta_{{X^{\prime}_0}},$
we obtain
$${f_0}_{\ast}\varepsilon^{\ast}\delta_{X_0} = \varepsilon^{\ast}{f_0}_{\ast}\delta_{X_0} = \varepsilon^{\ast}(\Sigma f_0)^{\ast}\delta_{{X^{\prime}_0}} = (\Sigma f_0\circ \varepsilon)^{\ast}\delta_{{X^{\prime}_0}} = (\varepsilon^{\prime} \circ g_0)^{\ast}\delta_{{X^{\prime}_0}}= {g_0}^{\ast} (\varepsilon^{\prime})^{\ast}\delta_{{X^{\prime}_0}},$$
that is, $(f_0,g_0) \colon \varepsilon^{\ast}\delta_{X_0} \rightarrow (\varepsilon^{\prime})^{\ast}\delta_{{X^{\prime}_0}}$ is a morphism of $\mathbb{E}$-extensions. Because $\mathfrak{s}$ is a realization, there is a morphism $(f_0,f_1,f_2,\cdots ,f_{n-1},f_n,g_0)$ such that the above diagram commutes. This proves that $\mathfrak{r}$ is a realization.

It remains to show that $\mathfrak{r}$ is an exact realization.

\item[{(R1)}]For any $\mathfrak{r}(\varepsilon) = [X_0 \overset{f_0}{\longrightarrow} X_1\overset{f_1}{\longrightarrow}\cdots {\longrightarrow} X_n \overset{f_n}{\longrightarrow}X_{n+1}]=[X^{\mr}]$, consider the following diagram
$$\xymatrix@C=1.2cm{
\C(-,X_0)\ar[r]\ar@{=}[d] &\C(-,X_1)\ar[r]\ar@{=}[d] \ar[r] & \cdots \ar[r] & \C(-,X_{n+1}) \ar[r]^{\varepsilon_{\ssh}}\ar@{=}[d]& {\mathbf{E}}^{1}(-,X_0)\ar[d]^{(\delta_{X_0}){\ssh}}\\
\C(-,X_0)\ar[r] &\C(-,X_1)\ar[r]  \ar[r] & \cdots \ar[r]& \C(-,X_{n+1})\ar[r]^{(\varepsilon^{\ast}\delta_{X_0})_{\ssh}}& {\mathbb{E}}(-,X_0).
}$$
For every $k\in\C(-,X_{n+1})$, we have
$$((({\delta_{{X_0}})\ssh})(\varepsilon_{\ssh}(k))= (({\delta_{{X_0}})\ssh})(k^{\ast}\varepsilon)=(({\delta_{{X_0}})\ssh})(\varepsilon k)=(\varepsilon k)^{\ast}\delta_{X_0}= ({k}^{\ast}{\varepsilon}^{\ast})\delta_{X_0}={k}^{\ast}({\varepsilon}^{\ast}\delta_{X_0})=(\varepsilon^{*}\delta_{X_0})_{\ssh}(k).$$
Hence the above diagram is commutative. Since the two sequences of abelian groups are isomorphic and the bottom row is exact, the top row is exact, that is,
$${\C(-,X_0)}\xLongrightarrow{\C(-,\ f_0)}\cdots\xLongrightarrow{\C(-,\ f_n)}\C(-,X_{n+1})\xLongrightarrow{~\varepsilon\ssh~}\E(-,X_0)$$ is exact. By the similar argument as above,
$${\C(X_{n+1},-)}\xLongrightarrow{\C(f_{n+1},\ -)}\cdots\xLongrightarrow{\C(f_0,\ -)}\C(X_0,-)\xLongrightarrow{~\varepsilon^{\ssh}~}\E(X_{n+1},-)$$ is also exact, therefore the piar $(X^{\mr},\varepsilon)$ is an $n$-exangle.
\item[{(R2)}]For any $X_0\in\C$, the zero element${}_{X_0}0_0=0\in\E^{1}(0,X_0)$, then $$\mathfrak{r}({}_{X_0}0_0)=\mathfrak{s}(0^{*}\delta_{X_0}) = \mathfrak{s}(0)=\mathfrak{s}({}_{X_0}0_0)=[X_0 \overset{1_{X_0}}{\longrightarrow} X_0{\longrightarrow} 0{\longrightarrow}\cdots {\longrightarrow} 0 {\longrightarrow}0].$$
Similarly, we obtain
$$\mathfrak{r}({}_00_{X_0})=[0 \overset{}{\longrightarrow} 0{\longrightarrow}\cdots{\longrightarrow} 0 {\longrightarrow} X_0\overset{1_{X_0}}{\longrightarrow}X_0].$$
This shows that $\mathfrak{r}$ is an exact realization.
\end{proof}

\begin{proposition}\label{proposition1}The triple $(\mathscr{C},\mathbf{E}^{1},\mathfrak{r})$ satisfies the axioms {\rm (EA1)}, {\rm (EA2)} and  $\text{\rm (EA2)}^{\rm op}$.
\end{proposition}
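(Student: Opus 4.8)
The strategy is to promote the isomorphism of Lemma~\ref{lemma6} to a natural isomorphism of bifunctors $\theta\colon\mathbf{E}^{1}\xrightarrow{\,\sim\,}\mathbb{E}$ that intertwines the realizations $\mathfrak{r}$ and $\mathfrak{s}$, and then to read off each of the three axioms for $(\mathscr{C},\mathbf{E}^{1},\mathfrak{r})$ from the corresponding axiom, already available for the $n$-exangulated category $(\mathscr{C},\mathbb{E},\mathfrak{s})$.

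For each pair $A,C\in\mathscr{C}$ set $\theta_{C,A}\colon\mathbf{E}^{1}(C,A)\to\mathbb{E}(C,A)$, $\varepsilon\mapsto\varepsilon^{\ast}\delta_{A}$; this is a group isomorphism by Lemma~\ref{lemma6}. I would first check that $\theta$ is natural in both variables. For $a\colon A\to A'$, since $\mathbf{E}^{1}(C,a)$ is post-composition with $\Sigma a$, we have $(a_{\ast}\varepsilon)^{\ast}\delta_{A'}=(\Sigma a\circ\varepsilon)^{\ast}\delta_{A'}=\varepsilon^{\ast}(\Sigma a)^{\ast}\delta_{A'}=\varepsilon^{\ast}(a_{\ast}\delta_{A})=a_{\ast}(\varepsilon^{\ast}\delta_{A})$, where the third equality is the defining identity $a_{\ast}\delta_{A}=(\Sigma a)^{\ast}\delta_{A'}$ of $\Sigma a$ from Lemma~\ref{lemma1}; for $c\colon C'\to C$ one has directly $(c^{\ast}\varepsilon)^{\ast}\delta_{A}=(\varepsilon\circ c)^{\ast}\delta_{A}=c^{\ast}(\varepsilon^{\ast}\delta_{A})$. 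By the very definition of $\mathfrak{r}$, $\mathfrak{r}(\varepsilon)=\mathfrak{s}(\theta_{C,A}(\varepsilon))$ for all $\varepsilon$. This yields the dictionary I will use throughout: a complex $X^{\mr}\in\CC$ is an $\mathfrak{r}$-conflation if and only if it is an $\mathfrak{s}$-conflation, the $\mathfrak{r}$-inflations (resp.\ $\mathfrak{r}$-deflations) coincide with the $\mathfrak{s}$-inflations (resp.\ $\mathfrak{s}$-deflations), and $\langle X^{\mr},\varepsilon\rangle$ is a distinguished $\mathfrak{r}$-$n$-exangle exactly when $\langle X^{\mr},\theta(\varepsilon)\rangle$ is a distinguished $\mathfrak{s}$-$n$-exangle.

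With this in hand, $\mathrm{(EA1)}$ is immediate: two composable $\mathfrak{r}$-inflations are $\mathfrak{s}$-inflations, their composite is an $\mathfrak{s}$-inflation by $\mathrm{(EA1)}$ for $\mathfrak{s}$, hence an $\mathfrak{r}$-inflation, and dually for deflations. For $\mathrm{(EA2)}$, take $\rho\in\mathbf{E}^{1}(D,A)$, $c\in\mathscr{C}(C,D)$ and distinguished $\mathfrak{r}$-$n$-exangles $\langle X^{\mr},c^{\ast}\rho\rangle$ and $\langle Y^{\mr},\rho\rangle$. Naturality of $\theta$ in the contravariant variable gives $\theta(c^{\ast}\rho)=c^{\ast}\theta(\rho)$, so these transport to the distinguished $\mathfrak{s}$-$n$-exangles $\langle X^{\mr},c^{\ast}(\theta(\rho))\rangle$ and $\langle Y^{\mr},\theta(\rho)\rangle$. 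Applying $\mathrm{(EA2)}$ for $\mathfrak{s}$ to $\theta(\rho)\in\mathbb{E}(D,A)$ and $c$ produces a \emph{good lift} $f^{\mr}=(\id_A,f_1,\ldots,f_n,c)$ whose mapping cone gives a distinguished $\mathfrak{s}$-$n$-exangle $\langle M^{\mr}_f,(d^X_0)_{\ast}\theta(\rho)\rangle$. The same chain map $f^{\mr}$ is a lift of $(\id_A,c)$ in the $\mathbf{E}^{1}$-sense, and naturality of $\theta$ in the covariant variable gives $(d^X_0)_{\ast}\theta(\rho)=\theta\bigl((d^X_0)_{\ast}\rho\bigr)$, so $\langle M^{\mr}_f,(d^X_0)_{\ast}\rho\rangle$ is a distinguished $\mathfrak{r}$-$n$-exangle; thus $f^{\mr}$ is a good lift for $(\mathscr{C},\mathbf{E}^{1},\mathfrak{r})$. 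The axiom $\mathrm{(EA2)}^{\mathrm{op}}$ is obtained by the dual transcription, using naturality of $\theta$ together with $\mathrm{(EA2)}^{\mathrm{op}}$ for $\mathfrak{s}$.

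I expect the only genuinely delicate point to be the naturality of $\theta$ under the covariant pushforward, since this is exactly where the construction of $\Sigma$ via Lemma~\ref{lemma1} is used: one must invoke $a_{\ast}\delta_{A}=(\Sigma a)^{\ast}\delta_{A'}$ both to identify $(a_{\ast}\varepsilon)^{\ast}\delta_{A'}$ with $a_{\ast}(\varepsilon^{\ast}\delta_{A})$ and, in the $\mathrm{(EA2)}$ step, to match the mapping-cone extension $(d^X_0)_{\ast}\rho$ with the one produced by $\mathfrak{s}$. Once naturality is secured, the remaining verifications are purely formal transcriptions through the isomorphism $\theta$, so I would not expect further obstacles.
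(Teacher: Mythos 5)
Your proof is correct and takes essentially the same route as the paper: both transport the axioms from $(\mathscr{C},\mathbb{E},\mathfrak{s})$ to $(\mathscr{C},\mathbf{E}^{1},\mathfrak{r})$ through the isomorphism $\varepsilon\mapsto\varepsilon^{\ast}\delta_{A}$ of Lemma \ref{lemma6}, with the key covariant compatibility $(a_{\ast}\varepsilon)^{\ast}\delta_{A'}=a_{\ast}(\varepsilon^{\ast}\delta_{A})$ coming from the defining identity of $\Sigma a$ in Lemma \ref{lemma1}. The only difference is organizational: you verify naturality of $\theta$ once up front and then transport formally, whereas the paper carries out the same computations inline within the proofs of (EA1) and (EA2).
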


\begin{proof}{\rm (EA1)} Let $A\ov{f}{\lra}B\ov{g}{\lra}C$ be any sequence of morphisms in ${\C}$.
Assume that $f$ and $g$ are $\mathfrak{r}$-inflations, then there are two
$\mathfrak{r}$-conflations $X^{\mr}\in\CC$ and $Y^{\mr}\in\CC$ satisfying $f={f_0}$ and $g={g_0}$, respectively.
Thus we assume
$$\mathfrak{r}(\varepsilon)=[A\overset{f}{\longrightarrow}B\overset{f_1}{\longrightarrow}X_1\overset{f_2}{\longrightarrow}\cdots\overset{f_{n-1}}{\longrightarrow} X_{n-1}\overset{f_n}{\longrightarrow} X_n]$$
$$\mathfrak{r}(\varepsilon^{\prime})=[B\overset{g}{\longrightarrow}C\overset{g_1}{\longrightarrow}Y_1\overset{g_2}{\longrightarrow}\cdots\overset{g_{n-1}}{\longrightarrow} Y_{n-1}\overset{g_n}{\longrightarrow} Y_n].$$
Since $\mathfrak{r}(\varepsilon) = \mathfrak{s}(\varepsilon^{\ast}\delta_{A})$ and $\mathfrak{r}(\varepsilon^{\prime}) = \mathfrak{s}((\varepsilon^{\prime})^{\ast}\delta_{B})$, we have
$$\mathfrak{s}(\varepsilon^{\ast}\delta_{A})=[A\overset{f}{\longrightarrow}B\overset{f_1}{\longrightarrow}X_1\overset{f_2}{\longrightarrow}\cdots\overset{f_{n-1}}{\longrightarrow} X_{n-1}\overset{f_n}{\longrightarrow} X_n]$$
$$\mathfrak{s}((\varepsilon^{\prime})^{\ast}\delta_{B})=[B\overset{g}{\longrightarrow}C\overset{g_1}{\longrightarrow}Y_1\overset{g_2}{\longrightarrow}\cdots\overset{g_{n-1}}{\longrightarrow} Y_{n-1}\overset{g_n}{\longrightarrow} Y_n].$$
This show $f$ and $g$ are $\mathfrak{s}$-inflations. By (EA1), there exists an $\mathfrak{s}$-conflation $Z^{\mr}\in\CC$ satisfying
$$\mathfrak{s}(\delta)=[A\overset{gf}{\longrightarrow}C\overset{h_0}{\longrightarrow}Z_1\overset{h_1}{\longrightarrow}\cdots\overset{h_{n-1}}{\longrightarrow} Z_{n-1}\overset{h_n}{\longrightarrow} Z_n].$$
Since $\mathbf{E}^{1}(Z_n,A) \cong\mathbb{E} (Z_n,A)$, we get that there exists $\varepsilon^{\prime \prime} \in \mathscr{C}(Z_n,\Sigma A)$ such that $\delta = (\varepsilon^{\prime \prime})^{*}\delta_{A}$. Moreover, since $\mathfrak{r}(\varepsilon^{\prime \prime}) = \mathfrak{s}((\varepsilon^{\prime \prime})^{\ast}\delta_{A})$, we obtain $$\mathfrak{r}(\varepsilon^{\prime \prime})=[A\overset{gf}{\longrightarrow}C\overset{h_0}{\longrightarrow}Z_1\overset{h_1}{\longrightarrow}\cdots\overset{h_{n-1}}{\longrightarrow} Z_{n-1}\overset{h_n}{\longrightarrow} Z_n].$$
So $g\circ f$ are $\mathfrak{r}$-inflations. Dually, we can show if $f$ and $g$ are $\mathfrak{r}$-deflations, then so is $g\circ f$.

{\rm (EA2)} For any $\delta\in\mathbf{E}^{1}(D,A)$ and $c\in\C(C,D)$,
let ${}_A\langle X^{\mr},c\uas\delta\rangle_C$ and ${}_A\langle Y^{\mr}, \delta\rangle_D$ be distinguished $n$-exangles. We may assume
$$\mathfrak{r}(\delta)=[A\overset{g_0}{\longrightarrow}Y_1\overset{g_1}{\longrightarrow}Y_1\overset{g_2}{\longrightarrow}\cdots\overset{g_{n-1}}{\longrightarrow} Y_{n}\overset{g_n}{\longrightarrow} D],$$
$$\mathfrak{r}(c^{\ast}\delta)=[A\overset{f_0}{\longrightarrow}X_1\overset{f_1}{\longrightarrow}X_2\overset{f_2}{\longrightarrow}\cdots\overset{f_{n-1}}{\longrightarrow} X_{n}\overset{f_n}{\longrightarrow}C].$$
Since $\mathfrak{r}(\delta) = \mathfrak{s}(\delta^{\ast}\delta_{A})$ and $\mathfrak{r}(c^{\ast}\delta) = \mathfrak{s}({(c^{\ast}\delta)^{\ast}}\delta_{A})$, $(1_A, c)$ has a good lift of $f^{\centerdot}$ in $(\C,\E,\mathfrak{s})$
$$\xymatrix{
A \ar[r]^{f_0} \ar@{}[dr]|{\circlearrowright} \ar@{=}[d] &X_1 \ar[r]^{f_1} \ar@{}[dr]|{\circlearrowright} \ar[d] &\cdot\cdot\cdot \ar[r]^{f_{n-1}} \ar@{}[dr]|{\circlearrowright} &X_n \ar[d] \ar@{}[dr]|{\circlearrowright} \ar[r]^{f_n} &C \ar[d]^{c} \ar@{-->}[r]^-{{(c^{\ast}\delta)^{\ast}}\delta_{A}} &\\
A \ar[r]_{g_0} &Y_1 \ar[r]_{g_1} &\cdot\cdot\cdot \ar[r]_{g_{n-1}} &Y_n \ar[r]_{g_n}  &D \ar@{-->}[r]^-{\delta^{\ast}\delta_{A}} &.}
$$
Moreover, the mapping cone gives a distinguished $n$-exangle ${}\langle M^{\mr}_f,(f_0)\sas\delta^{\ast}\delta_{A}\rangle$ in $(\C,\E,\s)$. Since $\mathbf{E}^{1}(D,X_1) \cong\mathbb{E}(D,X_1)$, there exists $\varepsilon\in \mathscr{C}(D,\Sigma X_1)$ such that $\varepsilon^{\ast}\delta_{X_1} = (f_0)_{\ast}\delta^{\ast}\delta_A$. Therefore $\varepsilon^{\ast}\delta_{X_1} = (f_0)_{\ast}\delta^{\ast}\delta_A= \delta^{\ast}(\Sigma f_0)^{\ast}\delta_{X_1}= (\Sigma f_0\circ\delta)^{\ast}\delta_{X_1}$. Because $$(({\delta_{X_1}})\ssh)(\varepsilon)=\varepsilon^{\ast}\delta_{X_1}=(\Sigma f\circ\delta)^{\ast}\delta_{X_1}=(({\delta_{X_1}})\ssh)(\Sigma f_0\circ\delta)$$ and $(({\delta_{X_1}})\ssh)$ is an isomorphism, we obtain $\varepsilon=\Sigma f_0\circ\delta$. Note that $\Sigma f_0\circ\delta={f_0}_{\ast}\delta$, and then the mapping cone also gives a distinguished $n$-exangle ${}\langle M^{\mr}_f,{f_0}_{\ast}\delta\rangle$ in $(\mathscr{C},\mathbf{E}^{1},\mathfrak{r})$.

This shows that $(1_A, c)$ has a good lift of $f^{\centerdot}$ in $(\mathscr{C},\mathbf{E}^{1},\mathfrak{r})$.
\end{proof}

\begin{lemma}\emph{ \cite[Proposition 4.8]{HLN}}\label{lemma2}
Let $\mathscr{C}$ be an additive category with an auto-equivalence $\Sigma \colon \C \rightarrow \C$, and $\mathbb{E}_{\Sigma} = \mathscr{C}(-,\Sigma-)\colon\C\op\times\C\to\Ab$. If we are given an exact realization $\mathfrak{t}$ of $\mathbb{E}_{\Sigma}$. We define that
 $$X_0\overset{f_0}{\longrightarrow}X_1\overset{f_1}{\longrightarrow}\cdots\overset{f_{n-1}}{\longrightarrow}  X_{n}\overset{f_n} {\longrightarrow} X_{n+1}\overset{\delta}{\longrightarrow}\Sigma {X_0}$$
is an $(n+2)$-angle if and only if  $$\mathfrak{t}(\delta)=[X_0\overset{f_0}{\longrightarrow}X_1\overset{f_1}{\longrightarrow}\cdots\overset{f_{n-1}}{\longrightarrow}  X_{n}\overset{f_n} {\longrightarrow} X_{n+1}].$$
Denote this class of $(n+2)$-angles by $\Theta$. Then $(\mathscr{C},\Sigma,\Theta)$ is an $(n+2)$-angulated category.
\end{lemma}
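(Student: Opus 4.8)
The statement is precisely \cite[Proposition 4.8]{HLN}, so the most economical route is to invoke that result directly, once one records that its hypotheses are in force: $\mathbb{E}_\Sigma=\mathscr{C}(-,\Sigma-)$ is representable through the auto-equivalence $\Sigma$, and $\mathfrak{t}$ is an exact realization making $(\mathscr{C},\mathbb{E}_\Sigma,\mathfrak{t})$ an $n$-exangulated category (this last clause is the tacit assumption of \cite[Proposition 4.8]{HLN}; without the axioms {\rm (EA1)}, {\rm (EA2)}, {\rm (EA2$\op$)} the conclusion cannot hold). If one wishes instead to reprove it, the plan is to verify the four axioms {\rm (N1)}--{\rm (N4)} for the triple $(\mathscr{C},\Sigma,\Theta)$, translating each one into a statement about distinguished $n$-exangles through the defining equivalence ``$\mathfrak{t}(\delta)=[X^{\mr}]$ if and only if the $(n+2)$-$\Sigma^n$-sequence with connecting morphism $\delta\in\mathscr{C}(X_{n+1},\Sigma X_0)=\mathbb{E}_\Sigma(X_{n+1},X_0)$ lies in $\Theta$''. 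The organizing observation is that the final morphism of an $(n+2)$-angle is \emph{literally} the extension that realizes the associated conflation, so every axiom of $\Theta$ has a direct counterpart in the $n$-exangulated data.

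First I would dispose of the routine axioms. Axiom {\rm (N1)(b)} is immediate from {\rm (R2)}: the trivial sequence $A\xrightarrow{1_A}A\to 0\to\cdots\to 0\to\Sigma A$ corresponds to $\mathfrak{t}({}_A0_0)=[A\xrightarrow{1_A}A\to 0\to\cdots\to 0]$. Closure of $\Theta$ under isomorphisms and finite direct sums in {\rm (N1)(a)} follows from additivity of $\mathbb{E}_\Sigma$ and from $\mathfrak{t}$ being defined on homotopy classes, while closure under direct summands is the corresponding summand-closure of distinguished $n$-exangles. For {\rm (N1)(c)} the point is that every morphism $f_0\colon A_0\to A_1$ is an inflation; since $\mathbb{E}_\Sigma$ is representable the relevant extension groups are large enough, and applying the mapping-cone construction of {\rm (EA2)} to the trivial distinguished $n$-exangles attached to $f_0$ produces a conflation beginning with $f_0$, hence an $(n+2)$-angle extending it. Axiom {\rm (N3)} follows from {\rm (R0)} (existence of lifts of morphisms of extensions) together with the long exact sequences of Lemma \ref{a1}, which supply the dotted morphisms and force the completed diagram to commute.

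The substantive axioms are {\rm (N2)} and {\rm (N4)}. Axiom {\rm (N4)} is essentially a restatement of {\rm (EA2)}: the good lift furnished by {\rm (EA2)} has a mapping cone that is a distinguished $n$-exangle, and after matching the signs with the matrices displayed in {\rm (N4)} this cone is exactly the required member of $\Theta$. The main obstacle is the rotation axiom {\rm (N2)}: I would have to show that the left rotation $X_1\to\cdots\to X_{n+1}\xrightarrow{\delta}\Sigma X_0\xrightarrow{(-1)^n\Sigma f_0}\Sigma X_1$ of an $(n+2)$-angle is again an $(n+2)$-angle, that is, $\mathfrak{t}\big((-1)^n\Sigma f_0\big)=[X_1\to\cdots\to X_{n+1}\xrightarrow{\delta}\Sigma X_0]$. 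This is the analogue of ``the shift of a distinguished triangle is distinguished'', and establishing it requires the full strength of {\rm (EA2)} and {\rm (EA2$\op$)} together with the naturality of $\Sigma$ and careful sign bookkeeping across the $n+2$ positions; since rotation is invertible it suffices to prove one implication and iterate, but this rotation-invariance is where essentially all the difficulty resides. Granting it, the biconditional in {\rm (N2)} follows, and combining {\rm (N1)}--{\rm (N4)} yields that $(\mathscr{C},\Sigma,\Theta)$ is an $(n+2)$-angulated category.
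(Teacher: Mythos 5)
Your proposal is correct and takes the same route as the paper: the paper offers no proof of this lemma at all, quoting it verbatim as \cite[Proposition 4.8]{HLN}, which is exactly your primary approach (your sketch of reproving (N1)--(N4) is extra and never needed). Your remark that the statement tacitly requires $(\mathscr{C},\mathbb{E}_{\Sigma},\mathfrak{t})$ to satisfy {\rm (EA1)}, {\rm (EA2)} and {\rm (EA2$\op$)} --- not merely that $\mathfrak{t}$ be an exact realization --- is accurate, and is confirmed by how the paper uses the lemma: it is invoked only after Proposition \ref{proposition1} establishes those axioms for $(\mathscr{C},\mathbf{E}^{1},\mathfrak{r})$.
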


\textbf{Now we are ready to prove the sufficiency of Theorem \ref{main}.}

\begin{proof} Let $(\mathscr{C}, \mathbb{E}, \mathfrak{s})$ be an $n$-exangulated category, where for any object $ X \in \mathscr{C}$, the morphism $X \rightarrow 0$ is a trivial inflation and the morphism $0 \rightarrow X$ is a trivial deflation.
By Proposition \ref{proposition4}, there exists an auto-equivalence $\Sigma \colon \mathscr{C} \rightarrow \mathscr{C}$. By Lemma \ref{lemma3}, we obtain that $\mathbf{E}^{1}(-,-):= \mathscr{C}(-,\Sigma -)$ is an additive bifunctor. By Proposition \ref{proposition2}, there exists a correspondence $\mathfrak{r}$ which associates an equivalence class $\mathfrak{r}(\varepsilon)$ to any extension $\varepsilon \in \mathbf{E}^{1}(C,A)$ for any objects $A,C \in \mathscr{C}$. Moreover, the correspondence $\mathfrak{r}$ is an exact realization. By Proposition \ref{proposition1}, the triple $(\mathscr{C},\mathbf{E}^{1},\mathfrak{r})$ satisfies (EA1), (EA2) and (EA2)$^{\text{op}}$. By Lemma \ref{lemma2}, $\mathscr{C}$ has a structure of an $(n+2)$-angulated category $(\mathscr{C},\Sigma, \Theta)$, where $\Theta$ is the set of $(n+2)$-angles as defined in Lemma \ref{lemma2}. By Lemma \ref{lemma2}, $\Theta$ is $\mathbf{E}^{1}$-compatible. By Lemma \ref{lemma6} ,
$\mathbf{E}^{1}(X_0,Y_0) \cong \mathbb{E}(X_0,Y_0)$, so $\Theta$ is $\mathbb{E}$-compatible.

This completes the proof of Theorem \ref{main}.
\end{proof}

As a special case of Theorem \ref{main} when $n=1$, we have the following.

\begin{corollary}\emph{\cite[Theorem 3.3]{M}}
Let $(\mathscr{C}, \mathbb{E}, \mathfrak{s})$ be an extriangulated category. Then $\mathscr{C}$ has an $\mathbb{E}$-compatible triangulated structure $(\mathscr{C},T,\mathcal{T})$ if and only if for any object $ X \in \mathscr{C}$, the morphism $X \rightarrow 0$ is an inflation and the morphism $0 \rightarrow X$ is a deflation.
\end{corollary}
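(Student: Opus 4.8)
The plan is to obtain this statement as the immediate $n=1$ specialization of Theorem \ref{main}, the only work being to translate the $n$-exangulated/$(n+2)$-angulated vocabulary into the extriangulated/triangulated one. So I would not re-run any of the sufficiency argument; instead I would set $n=1$ throughout and invoke two standard dictionary entries together with Remark \ref{remark}.

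First I would record the identifications that legitimize the specialization. On the exangulated side, the remark following the definition of $n$-exangulated category notes that a $1$-exangulated category is exactly an extriangulated category (this is \cite[Proposition 4.3]{HLN}); hence the hypothesis that $(\mathscr{C},\mathbb{E},\mathfrak{s})$ is extriangulated is precisely the hypothesis of Theorem \ref{main} with $n=1$. On the angulated side, \cite{GKO} tells us that a $3$-angulated category, i.e.\ an $(n+2)$-angulated category with $n=1$, is nothing but a classical triangulated category, the $1$-suspension functor $\Sigma$ playing the role of the shift $T$. Consequently an $\mathbb{E}$-compatible $(n+2)$-angulated structure $(\mathscr{C},\Sigma,\Theta)$ in the sense of Definition \ref{b1} is, for $n=1$, the same datum as an $\mathbb{E}$-compatible triangulated structure $(\mathscr{C},T,\mathcal{T})$.

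Next I would apply Remark \ref{remark}, which asserts that in an extriangulated category the notion of inflation coincides with that of trivial inflation, and the notion of deflation with that of trivial deflation. Therefore the condition appearing in Theorem \ref{main}, namely that $X\to 0$ be a trivial inflation and $0\to X$ a trivial deflation, is for $n=1$ literally the condition that $X\to 0$ be an inflation and $0\to X$ a deflation. Substituting these identifications into the biconditional of Theorem \ref{main} with $n=1$ yields exactly the asserted statement, and the Corollary follows.

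Since the entire argument is a specialization, I do not expect a genuine obstacle; the only point demanding care is to confirm that the $\mathbb{E}$-compatibility condition of Definition \ref{b1} degenerates correctly. Reading that condition at $n=1$, it says that for every $3$-angle $X_0\to X_1\to X_2\to\Sigma X_0$ the truncated sequence $X_0\to X_1\to X_2\overset{\delta}{\dashrightarrow}$ is a distinguished $1$-exangle for some $\delta\in\mathbb{E}(X_2,X_0)$, which is precisely the usual compatibility between distinguished triangles and $\mathbb{E}$-triangles once $\Sigma$ is identified with $T$; this is immediate and completes the reduction.
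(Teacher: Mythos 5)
Your proposal is correct and is essentially identical to the paper's own proof, which simply cites Theorem \ref{main} together with Remark \ref{remark} (inflation $=$ trivial inflation and deflation $=$ trivial deflation at $n=1$). Your additional checks of the dictionary (1-exangulated $=$ extriangulated, 3-angulated $=$ triangulated, and the degeneration of $\mathbb{E}$-compatibility) merely make explicit what the paper leaves implicit.
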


\proof This follows from Theorem \ref{main} and Remark \ref{remark}.  \qed

\textbf{Jian He}\\
Department of Mathematics, Nanjing University, 210093 Nanjing, Jiangsu, P. R. China\\
E-mail: \textsf{jianhe30@163.com}\\[0.3cm]
\textbf{Panyue Zhou}\\
College of Mathematics, Hunan Institute of Science and Technology, 414006 Yueyang, Hunan, P. R. China.\\
E-mail: \textsf{panyuezhou@163.com}


\begin{thebibliography}{99}
\bibitem[BT]{BT} P. Bergh and M. Thaule. The axioms for $n$-angulated categories. Algebr. Geom. Topol. 13(4): 2405-2428, 2013.

 \bibitem[GKO]{GKO} C. Geiss, B. Keller, S. Oppermann. $n$-angulated categories.  J. Reine Angew. Math.  675: 101--120, 2013.

\bibitem[HLN]{HLN} M. Herschend, Y. Liu, H. Nakaoka.  $n$-exangulated categories. arXiv: 1709.06689v3, 2017.

\bibitem[HZZ1]{HZZ1} J. Hu, D. Zhang, P. Zhou. Proper classes and Gorensteinness in extriangulated categories. J. Algebra 551: 23--60, 2020.

\bibitem[HZZ2]{HZZ2} J. Hu, D. Zhang, P. Zhou. Two new classes of $n$-exangulated categories. J. Algebra 568: 1--21, 2021.

\bibitem[J]{J} G. Jasso.  $n$-abelian and $n$-exact categories. Math. Z. 283(3--4): 703--759, 2016.

\bibitem[LZ]{LZ}  Y. Liu, P. Zhou. Frobenius $n$-exangulated categories. J. Algebra 559: 161--183, 2020.

\bibitem[M]{M} D. Msapato.   A characterisation of extriangulated categories with triangulated structure. arXiv: 2010.07138v1, 2020.

\bibitem[NP]{NP} H. Nakaoka, Y. Palu.  Extriangulated categories, Hovey twin cotorsion pairs and model structures. Cah. Topol. G\'{e}om. Diff\'{e}r. Cat\'{e}g. 60(2): 117--193, 2019.

\bibitem[ZZ]{ZZ} P. Zhou, B. Zhu. Triangulated quotient categories revisted. J. Algebra 502: 196--232, 2018.


\end{thebibliography}
\end{document}